%------------------------------------------------------------------------------
% Beginning of journal.tex
%------------------------------------------------------------------------------
%
% AMS-LaTeX version 2 sample file for journals, based on amsart.cls.
%
%        ***     DO NOT USE THIS FILE AS A STARTER.      ***
%        ***  USE THE JOURNAL-SPECIFIC *.TEMPLATE FILE.  ***
%
% Replace amsart by the documentclass for the target journal, e.g., tran-l.
%

\documentclass[reqno]{amsart}
\usepackage{pictexwd,dcpic}
\usepackage{slashbox}

\topmargin 0in \textheight 8.5in \textwidth 6.5in \oddsidemargin
-0.105in \evensidemargin -0.105in

\newtheorem{theorem}{Theorem}[section]
\newtheorem{proposition}[theorem]{Proposition}

\newtheorem{corollary}[theorem]{Corollary}

\theoremstyle{definition}

\newtheorem{example}[theorem]{Example}

\theoremstyle{remark}
\newtheorem{remark}[theorem]{Remark}

\numberwithin{equation}{section}

%    Absolute value notation

%    Blank box placeholder for figures (to avoid requiring any
%    particular graphics capabilities for printing this document).

\begin{document}

\title{Algebraic integers as special values of modular units}

\author{Ja Kyung Koo}
%    Address of record for the research reported here
\address{Department of Mathematical Sciences, KAIST}
%    Current address
\curraddr{Daejeon 373-1, Korea} \email{jkkoo@math.kaist.ac.kr}
%    \thanks will become a 1st page footnote.
\thanks{}

\author{Dong Hwa Shin}
\address{Department of Mathematical Sciences, KAIST}
\curraddr{Daejeon 373-1, Korea} \email{shakur01@kaist.ac.kr}
\thanks{}

\author{Dong Sung Yoon}
\address{Department of Mathematical Sciences, KAIST}
\curraddr{Daejeon 373-1, Korea} \email{yds1850@kaist.ac.kr}
\thanks{}

\subjclass[2010]{11F03, 11F20}

\keywords{Dedekind eta-function, Siegel functions
\newline The research was partially supported by Basic Science Research Program through the NRF of Korea
funded by MEST (2010-0001654).}

\begin{abstract}
Let $\varphi(\tau)=\eta((\tau+1)/2)^2/\sqrt{2\pi}e^\frac{\pi
i}{4}\eta(\tau+1)$ where $\eta(\tau)$ is the Dedekind eta-function.
We show that if $\tau_0$ is an imaginary quadratic number with
$\mathrm{Im}(\tau_0)>0$ and $m$ is an odd integer, then
$\sqrt{m}\varphi(m\tau_0)/\varphi(\tau_0)$ is an algebraic integer
dividing $\sqrt{m}$. This is a generalization of Theorem 4.4 given
in \cite{B-C-Z}. On the other hand, let $K$ be an imaginary
quadratic field and $\theta_K$ be an element of $K$ with
$\mathrm{Im}(\theta_K)>0$ which generators the ring of integers of
$K$ over $\mathbb{Z}$. We develop a sufficient condition of $m$ for
$\sqrt{m}\varphi(m\theta_K)/\varphi(\theta_K)$ to become a unit.
\end{abstract}

\maketitle

\section{Introduction}

The \textit{Dedekind eta-function} $\eta(\tau)$ is defined to be the
following infinite product expansion
\begin{equation}\label{eta}
\eta(\tau)=\sqrt{2\pi}e^\frac{\pi
i}{4}q^\frac{1}{24}\prod_{n=1}^\infty(1-q^n)\quad(\tau\in\mathfrak{H})
\end{equation}
where $q=e^{2\pi i\tau}$ and
$\mathfrak{H}=\{\tau\in\mathbb{C}:\mathrm{Im}(\tau)>0\}$. Define a
function
\begin{equation}\label{phi}
\varphi(\tau)=\frac{1}{\sqrt{2\pi}e^\frac{\pi
i}{4}}\frac{\eta((\tau+1)/2)^2}{\eta(\tau+1)}= \prod_{n=1}^\infty
(1+q^{n-\frac{1}{2}})^2(1-q^n) \quad(\tau\in\mathfrak{H}).
\end{equation}
Motivated by Ramanujan's evaluation of $\varphi(mi)/\varphi(i)$ for
some positive integers $m$ (\cite{Ramanujan}), which are algebraic
numbers, Berndt-Chan-Zhang proved the following theorem.

\begin{theorem}[\cite{B-C-Z} Theorem 4.4]\label{BCZ}
Let $m$ and $n$ be positive integers. If $m$ is odd, then
$\sqrt{2m}\varphi(mni)/\varphi(ni)$ is an algebraic integer dividing
$2\sqrt{m}$, while if $m$ is even, then
$2\sqrt{m}\varphi(mni)/\varphi(ni)$ is an algebraic integer dividing
$4\sqrt{m}$.
\end{theorem}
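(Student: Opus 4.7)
The plan is to realize $\sqrt{2m}\,\varphi(m\tau)/\varphi(\tau)$ as (an algebraic constant times) a modular unit on a congruence subgroup, and then to deduce the conclusions from the theory of complex multiplication applied to modular units. Using $\eta(\tau+1)=e^{\pi i/12}\eta(\tau)$ in the defining expression (\ref{phi}), the ratio becomes
$$\frac{\varphi(m\tau)}{\varphi(\tau)}=\frac{\eta\bigl((m\tau+1)/2\bigr)^{2}\,\eta(\tau)}{\eta(m\tau)\,\eta\bigl((\tau+1)/2\bigr)^{2}},$$
an $\eta$-quotient of weight zero. Rewriting it via the Weber function $\mathfrak{f}(\tau)=e^{-\pi i/24}\eta((\tau+1)/2)/\eta(\tau)$ gives
$$\frac{\varphi(m\tau)}{\varphi(\tau)}=\frac{\mathfrak{f}(m\tau)^{2}\,\eta(m\tau)}{\mathfrak{f}(\tau)^{2}\,\eta(\tau)},$$
which is built from the classical modular units $\mathfrak{f}(\tau)$ (a function on $\Gamma(48)$) and $\eta(m\tau)/\eta(\tau)$ (a function on $\Gamma_{0}(m)$ up to a character); hence the whole expression descends to a modular function on some congruence subgroup contained in $\Gamma(48m)$.

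Next, I would check that the Fourier expansion of $\sqrt{2m}\,\varphi(m\tau)/\varphi(\tau)$ at the cusp $i\infty$ has algebraic-integer coefficients. This falls out of the product formula in (\ref{phi}): the $q^{1/2}$-expansion of $\varphi(\tau)$ has integer coefficients with constant term $1$, so $\varphi(m\tau)/\varphi(\tau)$ has integer coefficients, and $\sqrt{2m}$ is itself integral. Performing a cusp-by-cusp analysis via Ligozat's order formula to verify holomorphicity on the corresponding modular curve, I would then appeal to Shimura's theorem that the value at an imaginary quadratic point of a modular function with algebraically integral $q$-expansion lies in the ring of integers of a ray class field of $K=\mathbb{Q}(i)$. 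Evaluated at $\tau_{0}=ni$, this shows $\sqrt{2m}\,\varphi(mni)/\varphi(ni)$ is an algebraic integer; a parallel argument with $2\sqrt{m}$ in place of $\sqrt{2m}$ handles the even case.

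The divisibility assertion is subtler. Since $\bigl(\sqrt{2m}\,\varphi(mni)/\varphi(ni)\bigr)\cdot\bigl(\sqrt{2}\,\varphi(ni)/\varphi(mni)\bigr)=2\sqrt{m}$, it suffices to establish the companion integrality $\sqrt{2}\,\varphi(ni)/\varphi(mni)\in\overline{\mathbb{Z}}$ (and the analogue with $2$ in place of $\sqrt{2}$ for the even case). To produce this companion I would apply the Fricke-type involution $\tau\mapsto -1/(m\tau)$, which sends $ni$ to $i/(mn)$, together with the Dedekind functional equation $\eta(-1/\tau)=\sqrt{-i\tau}\,\eta(\tau)$, to derive a transformation of the form
$$\sqrt{m}\,\frac{\varphi(-1/\tau)}{\varphi(-1/(m\tau))}=\zeta_{m}\,\frac{\varphi(m\tau)}{\varphi(\tau)}$$
for a suitable root of unity $\zeta_{m}$; evaluating at $\tau_{0}=ni$ transfers integrality from one ratio to the other and gives the required divisibility.

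The main obstacle I anticipate is the explicit Fricke transformation of $\varphi$. The half-integral arguments $(\tau\pm 1)/2$ combined with the $24$-th root-of-unity multipliers in the $\eta$ functional equation require careful bookkeeping modulo $24$, and one must separately verify the holomorphicity of the resulting $\eta$-quotient at every cusp of the working congruence subgroup; a pole at any cusp would obstruct the CM integrality conclusion and would have to be offset by an additional scalar factor, which is precisely what accounts for the $2\sqrt{m}$ versus $4\sqrt{m}$ dichotomy in the odd and even cases.
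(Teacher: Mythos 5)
Your overall architecture (integrality of the normalized ratio at CM points, then divisibility via the companion factor produced by $\tau\mapsto -1/(m\tau)$) is close in spirit to Berndt--Chan--Zhang's original argument, and is genuinely different from the paper's route, which instead decomposes $\varphi(\tau)=-\frac{1}{\sqrt{2\pi}}\eta(\tau)g_{(\frac12,\frac12)}(\tau)$ and treats the eta-quotient and the Siegel-function quotient separately. However, the step on which your whole first half rests is false as stated: it is \emph{not} true that a modular function which is holomorphic on $\mathfrak{H}$ and has algebraic-integer Fourier coefficients at the single cusp $i\infty$ takes algebraic-integer values at imaginary quadratic points. A concrete counterexample is $\Delta(2\tau)/\Delta(\tau)=q\prod_{n\geq1}(1-q^{2n})^{24}(1-q^{n})^{-24}$: it is holomorphic and nonvanishing on $\mathfrak{H}$ with integral $q$-expansion and leading coefficient $1$, yet $\Delta(2i)/\Delta(i)=2^{-9}$. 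The correct criterion (Kubert--Lang; packaged in the paper as Proposition \ref{integrality}(i)) requires the $q$-expansions of \emph{all} $\mathrm{Gal}(\mathcal{F}_N/\mathcal{F}_1)$-conjugates --- equivalently, the expansions at all cusps --- to have algebraic-integer coefficients, and it is exactly this all-cusps analysis that forces the normalizing constants $\sqrt{2m}$ versus $2\sqrt{m}$. Your argument never engages with it: as written it would equally ``prove'' that $\varphi(m\tau_0)/\varphi(\tau_0)$ itself is an algebraic integer, which is false (by the paper's own computation $\varphi(3i)/\varphi(i)=\sqrt[4]{(3+2\sqrt3)/9}$, whose fourth power has norm $-1/27$). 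Relatedly, ``holomorphicity on the corresponding modular curve'' would force the function to be constant; what is needed is holomorphy on $\mathfrak{H}$ only, plus the coefficient analysis at every cusp. The paper avoids all of this by quoting the integrality of Siegel functions over $\mathbb{Z}[j(\tau)]$ together with the classical divisibility $a^{12}\Delta((a\tau_0+b)/d)/\Delta(\tau_0)\mid(ad)^{12}$.

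Your Fricke identity is also stated backwards. From $\mathfrak{f}(-1/\tau)=\mathfrak{f}(\tau)$ and $\eta(-1/\tau)=\sqrt{-i\tau}\,\eta(\tau)$ one gets $\varphi(-1/\tau)=\sqrt{-i\tau}\,\varphi(\tau)$, hence $\sqrt{m}\,\varphi(-1/\tau)/\varphi(-1/(m\tau))=\varphi(\tau)/\varphi(m\tau)$, the \emph{reciprocal} of what you wrote (the two sides you equate are reciprocals of one another, so the identity cannot hold unless the ratio is a root of unity). Fortunately the reciprocal is exactly what your divisibility scheme needs: it identifies $\sqrt{2}\,\varphi(ni)/\varphi(mni)$ with $\sqrt{2m}\,\varphi(m\sigma)/\varphi(\sigma)$ at the CM point $\sigma=i/(mn)$, so that $\bigl(\sqrt{2m}\,\varphi(mni)/\varphi(ni)\bigr)\bigl(\sqrt{2m}\,\varphi(m\sigma)/\varphi(\sigma)\bigr)=2\sqrt{m}$ and the divisibility follows once integrality is known at \emph{arbitrary} imaginary quadratic points. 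So the second half is salvageable after correcting the sign of the transformation; the genuine gap is the unjustified (and, in the form invoked, false) integrality criterion in the first half.
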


In this paper we shall revisit the above theorem and improve the
statement when $m$ is odd, as follows:

\begin{theorem}\label{main}
Let $m$ be a positive integer and $\tau_0\in\mathfrak{H}$ be
imaginary quadratic. Then
$2\sqrt{m}\varphi(m\tau_0)/\varphi(\tau_0)$ is an algebraic integer
dividing $4\sqrt{m}$. In particular, if $m$ is odd, then
$\sqrt{m}\varphi(m\tau_0)/\varphi(\tau_0)$ is an algebraic integer
dividing $\sqrt{m}$.
\end{theorem}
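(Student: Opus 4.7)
The plan is to re-express $\sqrt{m}\,\varphi(m\tau)/\varphi(\tau)$ as an eta quotient in the shifted variable $\sigma=(\tau+1)/2$, verify via the standard cusp-order formula for $\eta$ that both this expression and its reciprocal are holomorphic at every cusp of a suitable congruence subgroup of level dividing $2m$, and then invoke the classical principle from the theory of complex multiplication: a weight-zero modular function that is holomorphic on the compactified upper half-plane and has an algebraic integer $q$-expansion at $\infty$ takes algebraic integer values at imaginary quadratic arguments in $\mathfrak{H}$.

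Writing $\sigma=(\tau+1)/2$, definition \eqref{phi} collapses to
\[
\varphi(\tau)=\frac{1}{\sqrt{2\pi}\,e^{\pi i/4}}\cdot\frac{\eta(\sigma)^{2}}{\eta(2\sigma)}.
\]
For odd $m$, the shifts $(m\tau+1)/2=m\sigma-(m-1)/2$ and $m\tau+1=2m\sigma-(m-1)$ are translations by \emph{integers}, so repeated application of $\eta(\tau+1)=e^{\pi i/12}\eta(\tau)$ makes all roots of unity cancel cleanly, leaving the clean identity
\[
\frac{\varphi(m\tau)}{\varphi(\tau)}=\frac{\eta(m\sigma)^{2}\,\eta(2\sigma)}{\eta(\sigma)^{2}\,\eta(2m\sigma)}.
\]
The right-hand side is an eta quotient of weight zero with exponent vector $(r_{1},r_{2},r_{m},r_{2m})=(-2,1,2,-1)$. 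Since $\prod_{d\mid 2m}d^{r_{d}}=m$, the $\sqrt{m}$ factor on the left is precisely what is needed so that the square $\bigl(\sqrt{m}\,\varphi(m\tau)/\varphi(\tau)\bigr)^{2}$ is an honest modular function on $\Gamma_{0}(2m)$ with trivial character, by the standard criteria of Newman and Ligozat for eta quotients.

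Next I would apply the standard cusp-order formula
\[
\mathrm{ord}_{a/c}\Bigl(\prod_{d\mid N}\eta(d\sigma)^{r_{d}}\Bigr)=\frac{N}{24}\sum_{d\mid N}\frac{\gcd(c,d)^{2}\,r_{d}}{\gcd(c^{2},N)\,d},\qquad N=2m,
\]
to check that both the above eta quotient and its reciprocal (exponent vector $(2,-1,-2,1)$) have non-negative order at every cusp of $\Gamma_{0}(2m)$. Their $q$-expansions at $\infty$ both start with $1+O(q)$ and lie in $\mathbb{Z}[[q]]$, so $\sqrt{m}\,\varphi(m\tau)/\varphi(\tau)$ has $q$-expansion coefficients in $\sqrt{m}\,\mathbb{Z}$ and its reciprocal in $\mathbb{Z}$. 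By the CM principle, applied to the trivial-character modular function obtained after squaring, both $\alpha:=\sqrt{m}\,\varphi(m\tau_{0})/\varphi(\tau_{0})$ and $\varphi(\tau_{0})/\varphi(m\tau_{0})=\sqrt{m}/\alpha$ are algebraic integers; since their product is $\sqrt{m}$, the value $\alpha$ divides $\sqrt{m}$ in the ring of algebraic integers, giving the odd case.

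The same method handles even $m$, with a slightly altered shift identity that introduces an extra factor of $2$ at the cusps lying above the prime $2$ and produces the weaker divisor $4\sqrt{m}$. The main obstacle I anticipate is the cusp-order bookkeeping on $\Gamma_{0}(2m)$ at the cusps over $2$: this is exactly where the odd and even cases of the theorem bifurcate, and it is there that any spurious factor of $2$ in the divisibility conclusion would appear, so these orders must be computed with care.
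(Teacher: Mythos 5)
There is a genuine gap, and it sits at the center of your argument. Your reduction of $\varphi(m\tau)/\varphi(\tau)$ to the eta quotient $\eta(m\sigma)^2\eta(2\sigma)/\eta(\sigma)^2\eta(2m\sigma)$ in the variable $\sigma=(\tau+1)/2$ is correct for odd $m$, but the integrality criterion you then invoke cannot work as stated: a weight-zero modular function holomorphic on the compactified modular curve is a constant, and it is impossible for a nonconstant $f$ and $1/f$ both to have nonnegative order at every cusp, since the divisor of a modular unit has degree zero. Concretely, the cusp-order formula you quote gives your quotient the value $\frac{2m}{24}\left(-2+\frac{1}{2}+\frac{2}{m}-\frac{1}{2m}\right)=-\frac{m-1}{8}<0$ at the cusp $0$ of $\Gamma_0(2m)$, so the function has a pole there and the planned verification fails already at the cusp $0$, not merely at the cusps over $2$. (There is also the smaller issue that the Newman--Ligozat congruences $\sum_d (N/d)r_d\equiv0\pmod{24}$ fail for the square unless $m\equiv1\pmod 4$, so the squared quotient need not live on $\Gamma_0(2m)$.)

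The criterion you actually need is integrality over $\mathbb{Z}[j(\tau)]$: holomorphy on $\mathfrak{H}$ together with algebraic-integer Fourier coefficients at \emph{every} cusp, combined with the fact that $j(\tau_0)$ is an algebraic integer (Proposition \ref{singular}(i)). Negative cusp orders are then harmless, but you must control the eta multiplier system --- the $24$th roots of unity and the $\sqrt{\gcd}$ factors --- at every cusp. Doing this by passing to $24$th powers and quotients of $\Delta$ recovers exactly the Berndt--Chan--Zhang argument via Proposition \ref{singular}(ii), and that route yields only the weaker conclusion that $2\sqrt{m}\,\varphi(m\tau_0)/\varphi(\tau_0)$ divides $4\sqrt{m}$; the spurious factors of $2$ you anticipate are not removable by more careful cusp bookkeeping within this framework. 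The sharp odd-$m$ statement rests on a different ingredient: the paper writes $\varphi(\tau)=-\frac{1}{\sqrt{2\pi}}\eta(\tau)g_{(\frac{1}{2},\frac{1}{2})}(\tau)$ and factors $g_{(\frac{1}{2},\frac{1}{2})}(m\tau)/g_{(\frac{1}{2},\frac{1}{2})}(\tau)=\pm\prod_{k=1}^{m-1}g_{(\frac{1}{2},\frac{1}{2}+\frac{k}{m})}(\tau)$, where each factor has \emph{composite} primitive denominator $2m$ and is therefore a modular unit over $\mathbb{Z}$ by Kubert--Lang (Proposition \ref{integrality}(ii)); this is what makes the Siegel-function part of the quotient a unit at CM points and reduces the theorem to $\sqrt{m}\,\eta(m\tau_0)/\eta(\tau_0)$ dividing $\sqrt{m}$. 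Your closing step, ``since their product is $\sqrt{m}$, the value $\alpha$ divides $\sqrt{m}$,'' presupposes that $\varphi(\tau_0)/\varphi(m\tau_0)$ is an algebraic integer, which is precisely the content of the divisibility claim and is not established by the argument given.
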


For $(r_1,r_2)\in\mathbb{Q}^2-\mathbb{Z}^2$, the \textit{Siegel
function} $g_{(r_1,r_2)}(\tau)$ is defined by
\begin{equation}\label{Siegel}
g_{(r_1,r_2)}(\tau)=-q^{\frac{1}{2}\mathbf{B}_2(r_1)} e^{\pi
ir_2(r_1-1)}(1-q_z)\prod_{n=1}^\infty(1-q^nq_z)(1-q^nq_z^{-1})\qquad(\tau\in\mathfrak{H})
\end{equation}
where $\mathbf{B}_2(x)=x^2-x+1/6$ is the second Bernoulli polynomial
and $q_z=e^{2\pi iz}$ with $z=r_1\tau+r_2$. We shall first express
the function $\varphi(m\tau)/\varphi(\tau)$ as a product of certain
eta-quotient and Siegel functions (Proposition
\ref{phiexpression}(i)). Then, we shall prove Theorem \ref{main} in
$\S$\ref{proof1} by using integrality of Siegel functions over
$\mathbb{Z}[j(\tau)]$ (Proposition \ref{integrality}) where
\begin{equation*}
j(\tau)= \bigg(\frac{\eta(\tau)^{24}+2^8\eta(2\tau)^{24}}
{\eta(\tau)^{16}\eta(2\tau)^8}\bigg)^3=q^{-1}+744+196884q
+21493760q^2+\cdots
\end{equation*}
is the well-known \textit{modular $j$-function} (\cite{Cox} Theorem
12.17).
\par
On the other hand, let $K$ be an imaginary quadratic field with
discriminant $d_K$, and define
\begin{eqnarray}\label{theta}
\theta_K=\left\{\begin{array}{ll}\frac{\sqrt{d_K}}{2}&\textrm{for}~d_K\equiv0\pmod{4}\\
\frac{-1+\sqrt{d_K}}{2}&\textrm{for}~
d_K\equiv1\pmod{4},\end{array}\right.
\end{eqnarray}
which generates the ring of integers of $K$ over $\mathbb{Z}$.
Ramachandra showed that if $N$ ($\geq2$) is an integer with more
than one prime ideal factor $K$, then
$g_{(0,\frac{1}{N})}(\theta_K)^{12N}$ is a unit (Proposition
\ref{Rama}). This fact, together with the Shimura's reciprocity law
(Proposition \ref{Stev}), will enable us to prove the following
theorem in $\S$\ref{proof2}.

\begin{theorem}\label{unitcriterion}
If $m$ ($\geq3$) is an odd integer whose prime factors split in $K$,
then $\sqrt{m}\varphi(m\theta_K)/\varphi(\theta_K)$ is a unit.
\end{theorem}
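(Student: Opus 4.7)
By Theorem \ref{main}, since $m$ is odd, $\alpha := \sqrt{m}\,\varphi(m\theta_K)/\varphi(\theta_K)$ is already an algebraic integer dividing $\sqrt{m}$. Consequently, the only primes appearing in the factorization of $(\alpha)$ lie above the rational prime divisors of $m$, and to prove that $\alpha$ is a unit it suffices to show that $1/\alpha$ is also an algebraic integer.

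\medskip

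The plan is to invoke Proposition \ref{phiexpression}(i) to write $\varphi(m\tau)/\varphi(\tau)$ as an eta-quotient times a finite product of Siegel functions $g_{(r_1,r_2)}(\tau)^{e_{r_1,r_2}}$ indexed by certain pairs $(r_1,r_2) \in \frac{1}{m}\mathbb{Z}^2 \setminus \mathbb{Z}^2$. After specializing to $\tau = \theta_K$ and combining the normalization $\sqrt{m}$ with the eta-quotient value at $\theta_K$, the problem reduces to showing that the specialized product $\prod g_{(r_1,r_2)}(\theta_K)^{e_{r_1,r_2}}$ is a unit. For this I would invoke Ramachandra's theorem (Proposition \ref{Rama}): since every prime factor of $m$ splits in $K$, for each $p \mid m$ we have $p\mathcal{O}_K = \mathfrak{p}\bar{\mathfrak{p}}$ with $\mathfrak{p} \neq \bar{\mathfrak{p}}$, so $m\mathcal{O}_K$ has at least two distinct prime ideal divisors, and hence $g_{(0,1/m)}(\theta_K)^{12m}$ is a unit. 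Using Shimura's reciprocity law (Proposition \ref{Stev}), I would identify the remaining $g_{(r_1,r_2)}(\theta_K)$ values in the product (or suitable powers of them) as Galois conjugates of this Ramachandra unit over the appropriate ring class field of $K$, thereby concluding that each such value is a unit.

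\medskip

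The main obstacle will be bookkeeping. The exponents $e_{r_1,r_2}$ appearing in Proposition \ref{phiexpression}(i) need not coincide with the exponent $12m$ required by Ramachandra's theorem; bridging this gap calls for careful use of distribution relations for Siegel functions, which express powers of the Ramachandra generator as products of $g_{(r_1,r_2)}$ over cosets of $\frac{1}{m}\mathbb{Z}^2/\mathbb{Z}^2$. Equally delicate is the precise accounting of $\sqrt{m}$: the explicit $\sqrt{m}$ factor together with the specialized eta-quotient value must combine to yield an ideal whose prime support exactly cancels the contributions from the Siegel product. The hypothesis that $m$ is odd preserves the clean form of Theorem \ref{main} (divisibility of $\sqrt{m}$ rather than $2\sqrt{m}$), and $m \geq 3$ ensures that Ramachandra's hypothesis is met for $N = m$.
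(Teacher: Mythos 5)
Your opening reduction and your overall decomposition are on the right track: the paper also splits $\sqrt{m}\varphi(m\theta_K)/\varphi(\theta_K)$ into the eta-quotient $\sqrt{m}\,\eta(m\theta_K)/\eta(\theta_K)$ times $g_{(\frac{1}{2},\frac{1}{2})}(m\theta_K)/g_{(\frac{1}{2},\frac{1}{2})}(\theta_K)$ via Proposition \ref{phiexpression}(i), and for the eta-quotient it does exactly what you propose (Proposition \ref{etaextend}): write $(\sqrt{m}\,\eta(m\theta_K)/\eta(\theta_K))^{24m}=\prod_{k}g_{(0,\frac{k}{m})}(\theta_K)^{12m}$ by Proposition \ref{morerelation}(ii), note that the splitting hypothesis makes every primitive denominator $b$ of $\frac{k}{m}$ have two prime ideal factors, and transport Ramachandra's unit $g_{(0,\frac{1}{b})}(\theta_K)^{12b}$ to $g_{(0,\frac{a}{b})}(\theta_K)^{12b}$ by Shimura reciprocity applied to $\bigl(\begin{smallmatrix}a&0\\0&a\end{smallmatrix}\bigr)\in W_{K,b}$. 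That half of your plan is sound, and the ``bookkeeping'' of exponents you worry about is harmless because a power of an algebraic number is a unit if and only if the number itself is.

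The genuine gap is in your treatment of the remaining factor. After applying the distribution relation (Proposition \ref{phiexpression}(iii)), the Siegel values you must control are $g_{(\frac{1}{2},\frac{1}{2}+\frac{k}{m})}(\theta_K)$, whose primitive denominators are $N=2m/\gcd(k,m)$, divisible by $2$. Your plan is to exhibit these as Galois conjugates of the Ramachandra unit $g_{(0,\frac{1}{N})}(\theta_K)^{12N}$, but Shimura reciprocity only produces conjugates $g_{(0,\frac{1}{N})\alpha}$ for $\alpha\in W_{K,N}$, i.e.\ $g_{(\frac{s}{N},\frac{t}{N})}$ with $(s,t)$ the second row of an \emph{invertible} matrix $\bigl(\begin{smallmatrix}t-Bs&-Cs\\s&t\end{smallmatrix}\bigr)$, whose determinant is the norm form $t^2-Bst+Cs^2$. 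For the vector $(\frac{1}{2},\frac{1}{2}+\frac{k}{m})=(\frac{N/2}{N},\frac{N/2+Nk/m}{N})$ this determinant is congruent mod $2$ to $1-B+C$, which is even for instance when $K=\mathbb{Q}(\sqrt{-1})$ ($B=0$, $C=1$); so the required matrix is not in $W_{K,N}$ and these values are \emph{not} in the Galois orbit of the Ramachandra unit. Moreover the prime $2$ is not assumed to split in $K$, so no splitting argument is available for the even part of $N$. The paper circumvents this entirely: since $N=2m/\gcd(k,m)$ is composite, Proposition \ref{integrality}(ii) says $g_{(\frac{1}{2},\frac{1}{2}+\frac{k}{m})}(\tau)$ is a modular unit over $\mathbb{Z}$, hence its value at any imaginary quadratic point is an algebraic unit outright (Proposition \ref{corollary}(ii)) --- no reciprocity, no Ramachandra, and no splitting hypothesis is needed for this factor. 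Without replacing your conjugation argument by something like this Kubert--Lang integrality statement, your proof does not close.
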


\section{Arithmetic properties of Siegel functions}

In this section we shall examine some arithmetic properties of
Siegel functions. For the classical theory of modular functions, one
can refer to \cite{Lang} or \cite{Shimura}.
\par
For each positive integer $N$, let $\zeta_N=e^\frac{2\pi i}{N}$ and
$\mathcal{F}_N$ be the field of meromorphic modular functions of
level $N$ whose Fourier coefficients belong to the $N^\textrm{th}$
cyclotomic field $\mathbb{Q}(\zeta_N)$.

\begin{proposition}
For each positive integer $N$, $\mathcal{F}_N$ is a Galois extension
of $\mathcal{F}_1=\mathbb{Q}(j(\tau))$ whose Galois group is
isomorphic to
\begin{equation*}
\mathrm{GL}_2(\mathbb{Z}/N\mathbb{Z})/\{\pm1_2\}=G_N\cdot
\mathrm{SL}_2(\mathbb{Z}/N\mathbb{Z})/\{\pm1_2\}
\end{equation*}
where
\begin{equation*}
G_N=\bigg\{\begin{pmatrix}1&0\\0&d\end{pmatrix}
~:~d\in(\mathbb{Z}/N\mathbb{Z})^*\bigg\}.
\end{equation*}
Here, the matrix $\begin{pmatrix}1&0\\0&d\end{pmatrix}\in G_N$ acts
on $\sum_{n=-\infty}^\infty c_n q^\frac{n}{N}\in\mathcal{F}_N$ by
\begin{equation*}
\sum_{n=-\infty}^\infty c_nq^\frac{n}{N}\mapsto
\sum_{n=-\infty}^\infty c_n^{\sigma_d}q^\frac{n}{N}
\end{equation*}
where $\sigma_d$ is the automorphism of $\mathbb{Q}(\zeta_N)$
induced by $\zeta_N\mapsto\zeta_N^d$. And, for an element
$\gamma\in\mathrm{SL}_2(\mathbb{Z}/N\mathbb{Z})/\{\pm1_2\}$ let
$\gamma'\in\mathrm{SL}_2(\mathbb{Z})$ be a preimage of $\gamma$ via
the natural surjection
$\mathrm{SL}_2(\mathbb{Z})\rightarrow\mathrm{SL}_2(\mathbb{Z}/N\mathbb{Z})/\{\pm1_2\}$.
Then $\gamma$ acts on $h\in\mathcal{F}_N$ by composition
\begin{equation*}
h\mapsto h\circ\gamma'
\end{equation*}
as linear fractional transformation.
\end{proposition}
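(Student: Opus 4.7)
The plan is to recognize this as the classical structure theorem for the modular function field of level $N$, proved in Lang \cite{Lang} Chapter 6 and Shimura \cite{Shimura} Chapter 6, and to sketch its skeleton. The approach splits according to the factorization $\mathrm{GL}_2(\mathbb{Z}/N\mathbb{Z}) = G_N \cdot \mathrm{SL}_2(\mathbb{Z}/N\mathbb{Z})$: first treat the geometric (right-composition) action of $\mathrm{SL}_2$, then overlay the Galois action on Fourier coefficients coming from $G_N$.

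As a concrete base I would take $\mathcal{F}_N$ to be generated over $\mathbb{Q}$ by $\zeta_N$, $j(\tau)$, and the Fricke functions $f_{(a,b)/N}(\tau)$ with $(a,b) \in (\mathbb{Z}/N\mathbb{Z})^2 \setminus \{(0,0)\}$. These generators have Fourier coefficients in $\mathbb{Q}(\zeta_N)$, are invariant under the principal congruence subgroup $\Gamma(N)$, and $\mathrm{SL}_2(\mathbb{Z})$ permutes the Fricke functions through its linear action on the index pair $(a,b)$. This shows at once that $\mathcal{F}_N / \mathcal{F}_1$ is a finite Galois extension.

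Next I would define the homomorphism
$$\Phi : \mathrm{GL}_2(\mathbb{Z}/N\mathbb{Z})/\{\pm 1_2\} \longrightarrow \mathrm{Gal}(\mathcal{F}_N / \mathcal{F}_1)$$
on each factor. For a class $\gamma \in \mathrm{SL}_2(\mathbb{Z}/N\mathbb{Z})/\{\pm 1_2\}$ with lift $\gamma' \in \mathrm{SL}_2(\mathbb{Z})$, the map $h \mapsto h \circ \gamma'$ does not depend on the chosen lift (different lifts differ by an element of $\pm \Gamma(N)$, which fixes $\mathcal{F}_N$), stabilizes $\mathcal{F}_N$, and fixes $\mathcal{F}_1$. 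For $\mathrm{diag}(1,d) \in G_N$, the coefficient-wise map $\sigma_d$ is a well-defined ring automorphism of $\mathcal{F}_N$ because every element lies in $\mathbb{Q}(\zeta_N)((q^{1/N}))$, and it fixes $\mathcal{F}_1 \subset \mathbb{Q}((q))$ since $\sigma_d$ fixes $\mathbb{Q}$. The two prescriptions combine into a single homomorphism once one checks compatibility on the overlap and notes that $-1_2$ acts trivially in both descriptions.

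To conclude that $\Phi$ is an isomorphism I would run a degree count: $|\mathrm{SL}_2(\mathbb{Z}/N\mathbb{Z})/\{\pm 1_2\}| = [\mathrm{SL}_2(\mathbb{Z}) : \pm\Gamma(N)]$ matches $[\mathcal{F}_N^{G_N} : \mathcal{F}_1]$, while $|G_N| = \phi(N) = [\mathbb{Q}(\zeta_N) : \mathbb{Q}]$ accounts for $[\mathcal{F}_N : \mathcal{F}_N^{G_N}]$, and together these reproduce $[\mathcal{F}_N : \mathcal{F}_1]$. Injectivity on each factor is immediate by testing on a Fricke generator or on a Fourier coefficient respectively. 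The main obstacle is precisely the identification of the fixed field of the $\mathrm{SL}_2$-action as $\mathbb{Q}(\zeta_N, j)$: this is a $q$-expansion / rationality argument (effectively a form of the $q$-expansion principle in characteristic zero) and constitutes the technical core of the statement, carried out in detail in Lang \cite{Lang} Chapter 6 $\S 3$ and Shimura \cite{Shimura} Proposition 6.9.
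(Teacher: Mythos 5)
Your outline is a faithful skeleton of the classical argument in Lang Chapter 6 \S 3 (equivalently Shimura Proposition 6.9), which is exactly what the paper cites for this proposition --- the paper gives no proof of its own beyond the reference to \cite{Lang} Chapter 6 Theorem 3. The decomposition into the $\mathrm{SL}_2$-action on Fricke functions and the $G_N$-action on Fourier coefficients, together with the degree count and the identification of the fixed field of $\mathrm{SL}_2$ as $\mathbb{Q}(\zeta_N, j)$ as the technical core, is precisely the structure of the cited proof, so no discrepancy arises.
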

\begin{proof}
See \cite{Lang} Chapter 6 Theorem 3.
\end{proof}
\begin{proposition}\label{integrality}
Let $(r_1,r_2)\in\frac{1}{N}\mathbb{Z}^2-\mathbb{Z}^2$ for some
integer $N\geq2$.
\begin{itemize}
\item[(i)] $g_{(r_1,r_2)}(\tau)$ is integral over
$\mathbb{Z}[j(\tau)]$. Namely, $g_{(r_1,r_2)}(\tau)$ is a zero of a
monic polynomial whose coefficients are in $\mathbb{Z}[j(\tau)]$.
\item[(ii)] Suppose that $(r_1,r_2)$ has the primitive
denominator $N$ (that is, $N$ is the smallest positive integer such
that $(Nr_1,Nr_2)\in\mathbb{Z}^2$). If $N$ is composite (that is,
$N$ has at least two prime factors), then $g_{(r_1,r_2)}(\tau)^{-1}$
is also integral over $\mathbb{Z}[j(\tau)]$.
\item[(iii)] $g_{(r_1,r_2)}(\tau)$ is holomorphic and has no
zeros and poles on $\mathfrak{H}$. Furthermore,
$g_{(r_1,r_2)}(\tau)$ (respectively,
$g_{(r_1,r_2)}(\tau)^{12N/\gcd(6,N)})$ belongs to
$\mathcal{F}_{12N^2}$ (respectively, $\mathcal{F}_N$).
\end{itemize}
\end{proposition}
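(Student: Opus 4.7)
I will handle the three parts in the order (iii), (i), (ii), since each builds on the previous ones. Part (iii) is essentially a matter of reading off the definition: absolute convergence of the product (\ref{Siegel}) on $\mathfrak{H}$ is clear from $|q|<1$, and no factor $1-q^n q_z^{\pm 1}$ can vanish, because $(r_1,r_2)\notin\mathbb{Z}^2$ forces $z=r_1\tau+r_2\notin\mathbb{Z}\tau+\mathbb{Z}$, while $|q^n q_z^{\pm 1}|\ne 1$ for $\mathrm{Im}(\tau)>0$. Hence $g_{(r_1,r_2)}$ is holomorphic and nowhere zero on $\mathfrak{H}$. The modularity statements $g_{(r_1,r_2)}\in\mathcal{F}_{12N^2}$ and $g_{(r_1,r_2)}^{12N/\gcd(6,N)}\in\mathcal{F}_N$ follow from the classical Kubert--Lang $\mathrm{SL}_2(\mathbb{Z})$-transformation formula for Siegel functions (which sends $g_{(r_1,r_2)}$ to a twelfth root of unity times $g_{(r_1,r_2)\gamma}$ for a transformed parameter pair), combined with a direct inspection of the cyclotomic field generated by the Fourier coefficients; see \cite{Lang} Chapter 19.

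For (i), I would run a symmetric-function argument. Set $h=g_{(r_1,r_2)}^{12N/\gcd(6,N)}$, which by (iii) lies in the finite Galois extension $\mathcal{F}_N/\mathcal{F}_1=\mathbb{Q}(j(\tau))$. By the explicit Galois action of $\mathrm{GL}_2(\mathbb{Z}/N\mathbb{Z})/\{\pm 1_2\}$ described in the previous proposition, every conjugate of $h$ is again, up to a root of unity, a $12N/\gcd(6,N)$-th power of a Siegel function whose parameters still lie in $\frac{1}{N}\mathbb{Z}^2-\mathbb{Z}^2$, and hence is holomorphic on $\mathfrak{H}$. Therefore the elementary symmetric functions in the conjugates of $h$ are modular functions on $\mathrm{SL}_2(\mathbb{Z})$ that are holomorphic on $\mathfrak{H}$ and meromorphic at the cusp, so they must be polynomials in $j(\tau)$ with rational coefficients. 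To upgrade the coefficients from $\mathbb{Q}$ to $\mathbb{Z}$, I would apply the $q$-expansion principle: after stripping off the $q^{\mathbf{B}_2(r_1)/2}$ prefactor and the root of unity, each Siegel function has a Fourier series in $\mathbb{Z}[\zeta_N][[q^{1/N}]]$, so the same holds for the symmetric functions, and intersection with $\mathbb{Q}((q))$ forces the coefficients into $\mathbb{Z}$. Hence $h\in\mathbb{Z}[j(\tau)]$, and since $g_{(r_1,r_2)}$ is a root of the monic polynomial $X^{12N/\gcd(6,N)}-h$, it is integral over $\mathbb{Z}[j(\tau)]$.

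For (ii), the same strategy applied to $g_{(r_1,r_2)}^{-1}$ will succeed provided its Fourier expansion has algebraic-integer coefficients at every cusp. The crux is that when $(r_1,r_2)$ has primitive denominator $N$ and $N$ has at least two distinct prime factors, the leading Fourier coefficient of $g_{(r_1',r_2')}(\tau)$ for each $(r_1',r_2')$ in the $\mathrm{GL}_2(\mathbb{Z}/N\mathbb{Z})$-orbit, after clearing the $q^{\mathbf{B}_2(r_1')/2}$ factor, is (up to a root of unity) a product of terms of the form $1-\zeta_N^a$ with $\gcd(a,N)<N$. By the classical cyclotomic computation these factors are units in $\mathbb{Z}[\zeta_N]$ precisely because $N$ is not a prime power (a prime-power $N$ would instead produce a ramified prime above $p$, and $g_{(r_1,r_2)}^{-1}$ would then have a genuine pole). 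Hence $g_{(r_1,r_2)}^{-1}$ also has an algebraic-integer $q$-expansion, and rerunning the argument of (i) yields its integrality over $\mathbb{Z}[j(\tau)]$.

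The principal obstacle is the orbit-wise cuspidal analysis in (ii): one must verify that the composite-denominator property, and hence unit-ness of the leading Fourier coefficient, is preserved along the entire $\mathrm{GL}_2(\mathbb{Z}/N\mathbb{Z})/\{\pm 1_2\}$-orbit of $(r_1,r_2)$ rather than only at the distinguished pair. The primitive-denominator assumption together with the matrix action make this verification clean, but it is exactly the point at which the hypothesis that $N$ has at least two distinct prime factors is essential; without it, the cyclotomic unit computation breaks at the ramified primes and the inverse fails to remain integral.
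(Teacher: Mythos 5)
The paper does not prove this proposition at all --- its ``proof'' is a pointer to \cite{K-S} \S3 and \cite{K-L} Chapter 2, Theorems 2.2 and 1.2 and Chapter 3, Theorem 5.2 --- so there is nothing in-text to compare against line by line. Your sketch reconstructs exactly the standard Kubert--Lang argument that those references contain (Galois conjugates of $g_{(r_1,r_2)}^{12N/\gcd(6,N)}$, symmetric functions landing in $\mathbb{Z}[j]$ via the $q$-expansion principle, and the cyclotomic-unit computation of leading coefficients at the cusps), so in spirit you are on the intended route.

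Two concrete slips need repair. First, in (i) you conclude ``hence $h\in\mathbb{Z}[j(\tau)]$.'' That is false: $h=g_{(r_1,r_2)}(\tau)^{12N/\gcd(6,N)}$ is not $\mathrm{SL}_2(\mathbb{Z})$-invariant (e.g.\ $g_{(0,\frac{1}{2})}^{24}$ is sent to $g_{(\frac{1}{2},0)}^{24}$ by $S$), so it cannot be a polynomial in $j$. What your symmetric-function argument actually shows is that $h$ is a root of a monic polynomial with coefficients in $\mathbb{Z}[j(\tau)]$, i.e.\ $h$ is \emph{integral over} $\mathbb{Z}[j(\tau)]$; the passage to $g_{(r_1,r_2)}$ itself then requires transitivity of integrality applied to the tower $\mathbb{Z}[j]\subset\mathbb{Z}[j][h]\subset\mathbb{Z}[j][h][g_{(r_1,r_2)}]$, not merely the observation that $X^{12N/\gcd(6,N)}-h$ is monic. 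Second, in (ii) the condition ``$1-\zeta_N^a$ with $\gcd(a,N)<N$'' is too weak to give units: for $N=6$ and $a=3$ one gets $1-\zeta_6^3=2$, which is not a unit even though $6$ is composite. The correct statement is that whenever a conjugate parameter $(r_1',r_2')$ has $r_1'\in\mathbb{Z}$, the preservation of the primitive denominator under the $\mathrm{GL}_2(\mathbb{Z}/N\mathbb{Z})$-action forces $r_2'=a/N$ with $\gcd(a,N)=1$, and it is for \emph{primitive} $N$-th roots of unity that $1-\zeta_N^a$ is a unit exactly when $N$ is not a prime power (since $\prod_{\gcd(a,N)=1}(1-\zeta_N^a)=\Phi_N(1)=1$ for such $N$). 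You gesture at this in your closing paragraph, but the condition as written in the body of (ii) would let non-units through. With these two corrections the sketch is a faithful rendering of the cited proofs.
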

\begin{proof}
See \cite{K-S} $\S$3, \cite{K-L} Chapter 2 Theorems 2.2, 1.2 and
Chapter 3 Theorem 5.2.
\end{proof}

\begin{remark}
Let $g(\tau)$ be an element of $\mathcal{F}_N$ for some positive
integer $N$. If both $g(\tau)$ and $g(\tau)^{-1}$ are integral over
$\mathbb{Q}[j(\tau)]$ (respectively, $\mathbb{Z}[j(\tau)]$), then
$g(\tau)$ is called a \textit{modular unit} (respectively,
\textit{modular unit over $\mathbb{Z}$}) of level $N$. As is
well-known, $g(\tau)$ is a modular unit if and only if it has no
zeros and poles on $\mathfrak{H}$ (\cite{K-L} Chapter 2 $\S$2 or
\cite{K-S} $\S$2). Hence $g_{(r_1,r_2)}(\tau)$ is a modular unit for
any $(r_1,r_2)\in\mathbb{Q}^2-\mathbb{Z}^2$ by (iii). Moreover, if
$(r_1,r_2)$ has the composite primitive denominator, then
$g_{(r_1,r_2)}(\tau)$ is a modular unit over $\mathbb{Z}$ by (ii).
\end{remark}

We recall some basic transformation formulas of Siegel functions.

\begin{proposition}\label{transform}
Let $r=(r_1,r_2)\in\mathbb{Q}^2-\mathbb{Z}^2$.
\begin{itemize}
\item[(i)] We have
\begin{equation*}
g_{-r}(\tau)=g_{(-r_1,-r_2)}(\tau)=-g_r(\tau).
\end{equation*}
\item[(ii)] For
$S=\begin{pmatrix}0&-1\\1&0\end{pmatrix}$ and
$T=\begin{pmatrix}1&1\\0&1\end{pmatrix}$ we get
\begin{eqnarray*}
\begin{array}{ccccc}
g_r(\tau)\circ S&=&\zeta_{12}^9 g_{rS}(\tau)&=&\zeta_{12}^9 g_{(r_2,-r_1)}(\tau)\\
g_r(\tau)\circ
T&=&\zeta_{12}g_{rT}(\tau)&=&\zeta_{12}g_{(r_1,r_1+r_2)}(\tau).
\end{array}
\end{eqnarray*}
Hence we obtain that for any $\gamma\in\mathrm{SL}_2(\mathbb{Z})$,
\begin{equation*}
g_r(\tau)\circ\gamma =\varepsilon g_{r\gamma}(\tau)
\end{equation*}
with $\varepsilon$ a $12^\textrm{th}$ root of unity (depending on
$\gamma$).
\item[(iii)] For $s=(s_1,s_2)\in\mathbb{Z}^2$ we have
\begin{equation*}
g_{r+s}(\tau) =g_{(r_1+s_1,r_2+s_2)}(\tau)
=(-1)^{s_1s_2+s_1+s_2}e^{-\pi i(s_1r_2-s_2r_1)}g_r(\tau).
\end{equation*}
\item[(iv)] Let $r\in\frac{1}{N}\mathbb{Z}^2-\mathbb{Z}^2$ for some integer $N\geq2$.
Each element $\alpha=\begin{pmatrix}a&b\\c&d\end{pmatrix}$ in
$\mathrm{GL}_2(\mathbb{Z}/N\mathbb{Z})/\{\pm1_2\}\simeq\mathrm{Gal}(\mathcal{F}_N/
\mathcal{F}_1)$ acts on $g_r(\tau)^{12N/\gcd(6,N)}$ by
\begin{equation*}
\bigg(g_r(\tau)^\frac{12N}{\gcd(6,N)}\bigg)^\alpha=
g_{r\alpha}(\tau)^\frac{12N}{\gcd(6,N)}
=g_{(r_1a+r_2c,r_1b+r_2d)}(\tau)^\frac{12N}{\gcd(6,N)}.
\end{equation*}
\item[(v)] We have the order formula
\begin{equation*}
\mathrm{ord}_q g_r(\tau)=\frac{1}{2}\mathbf{B}_2 (\langle
r_1\rangle)
\end{equation*}
where $\langle r_1\rangle$ is the fractional part of $r_1$ in the
interval $[0,1)$.
\end{itemize}
\end{proposition}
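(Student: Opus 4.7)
The plan is to prove each of (i)--(v) directly from the product expansion (\ref{Siegel}), handling the elementary identities (i), (iii), (v) by bare-hands computation and extracting (ii), (iv) from the modular behavior of the Klein form / Siegel function by standard steps, along the lines of \cite{K-L} Chapter 2.

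First I would dispose of (i), (iii), (v). For (i) I substitute $-r$ in (\ref{Siegel}): the infinite product over $n\geq1$ is symmetric in $q_z\leftrightarrow q_z^{-1}$, while the leading factor $(1-q_z)$ becomes $(1-q_z^{-1})=-q_z^{-1}(1-q_z)$. The identity $\mathbf{B}_2(-r_1)-\mathbf{B}_2(r_1)=2r_1$ and the explicit prefactor change $e^{\pi i(-r_2)(-r_1-1)}/e^{\pi ir_2(r_1-1)}=e^{2\pi ir_2}$ combine with $-q_z^{-1}=-q^{-r_1}e^{-2\pi ir_2}$ to give exactly the factor $-1$. For (iii), taking $r\mapsto r+s$ with $s\in\mathbb{Z}^2$, the infinite product is invariant (only $q^nq_z$ is shifted by an integer power of $q$, which gets absorbed by re-indexing), and the change in the prefactor $q^{\mathbf{B}_2(r_1+s_1)/2}e^{\pi i(r_2+s_2)(r_1+s_1-1)}(1-q_{z+s_1\tau+s_2})$ is a routine calculation using $\mathbf{B}_2(r_1+s_1)-\mathbf{B}_2(r_1)=s_1^2-s_1+2s_1r_1$ and $(1-q^{s_1}e^{2\pi is_2}q_z)=\text{(power of }q\text{)}\cdot(-e^{2\pi is_2}q_z)^{\epsilon}(1-q_z^{\pm1})$ according as $s_1\gtrless0$; collecting signs produces the sign $(-1)^{s_1s_2+s_1+s_2}$ and the phase $e^{-\pi i(s_1r_2-s_2r_1)}$. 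For (v), the only contribution to $\mathrm{ord}_q$ comes from the prefactor $q^{\mathbf{B}_2(r_1)/2}$ together with the factor $(1-q_z)\prod(1-q^nq_z)(1-q^nq_z^{-1})$: writing $r_1=\lfloor r_1\rfloor+\langle r_1\rangle$, the shift formula in (iii) reduces to the case $r_1\in[0,1)$, and there the displayed $q$-product has $q$-order $0$, leaving $\mathrm{ord}_q g_r(\tau)=\frac12\mathbf{B}_2(\langle r_1\rangle)$.

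Next I treat (ii). The $T$-transformation is easy: under $\tau\mapsto\tau+1$ we have $q\mapsto q$ and $z=r_1\tau+r_2\mapsto r_1\tau+(r_1+r_2)=:z'$, so $q_{z'}=e^{2\pi ir_1}q_z$; the infinite product times $(1-q_z)$ turns into the one for the shifted argument, and a careful bookkeeping of the prefactor $q^{\mathbf{B}_2(r_1)/2}e^{\pi ir_2(r_1-1)}$ together with the cyclotomic factor from $e^{2\pi ir_1}$ leaves exactly $\zeta_{12}\,g_{(r_1,r_1+r_2)}$, using $\mathbf{B}_2(r_1)/2\cdot2\pi i+\cdots\equiv\pi i/6\pmod{2\pi i\mathbb{Z}}$. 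The $S$-transformation is the hard part and is where I would rely on the classical identification $g_r(\tau)=e^{-\eta(r_1\tau+r_2,L)(r_1\tau+r_2)/2}\sigma(r_1\tau+r_2,L)/L\text{-normalization}$ with the Weierstrass $\sigma$ and $\eta$-functions (equivalently, via the Klein form $\mathfrak{k}_r$ of \cite{K-L} Ch.~2), whose modular transformation law $\mathfrak{k}_r(\gamma\tau)=(c\tau+d)^{-1}\mathfrak{k}_{r\gamma}(\tau)$ is standard; dividing out the automorphy factor gives $g_r\circ\gamma=\varepsilon(\gamma)g_{r\gamma}$ with $\varepsilon(\gamma)\in\mu_{12}$, and specializing to $\gamma=S$ pins down $\varepsilon(S)=\zeta_{12}^9$ via evaluation at a convenient fixed point or by comparison of $q$-expansions. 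Once we have the formulas on the generators $S,T$ of $\mathrm{SL}_2(\mathbb{Z})$ and know each is a $12$th root of unity, the general statement follows by induction on the length of a $\{S,T\}$-word.

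Finally, for (iv), the Galois group $\mathrm{Gal}(\mathcal{F}_N/\mathcal{F}_1)\cong\mathrm{GL}_2(\mathbb{Z}/N\mathbb{Z})/\{\pm1_2\}=G_N\cdot\mathrm{SL}_2(\mathbb{Z}/N\mathbb{Z})/\{\pm1_2\}$ is generated, per the preceding proposition, by the Fourier-coefficient automorphism $\sigma_d\in G_N$ (acting as $\zeta_N\mapsto\zeta_N^d$) and by $\gamma\in\mathrm{SL}_2(\mathbb{Z}/N\mathbb{Z})/\{\pm1_2\}$ acting by $h\mapsto h\circ\gamma'$. Raising (ii) to the $12N/\gcd(6,N)$-th power wipes out the $12$th-root-of-unity error, giving $(g_r^{12N/\gcd(6,N)})\circ\gamma'=g_{r\gamma'}^{12N/\gcd(6,N)}$, and (iii) plus reduction mod $N$ shows this depends only on $r\gamma'\bmod\mathbb{Z}^2$, i.e.\ on $r\gamma\in(\frac1N\mathbb{Z}/\mathbb{Z})^2$. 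For the $G_N$-part, observe that reading off the $q$-expansion of $g_r^{12N/\gcd(6,N)}$ from (\ref{Siegel}) shows all Fourier coefficients lie in $\mathbb{Q}(\zeta_N)$ and transform correctly under $\zeta_N\mapsto\zeta_N^d$ so as to match $g_{(r_1,dr_2)}^{12N/\gcd(6,N)}$, which equals $g_{r\cdot\mathrm{diag}(1,d)}^{12N/\gcd(6,N)}$ as required. Multiplicativity of the action then yields (iv) on the full $\mathrm{GL}_2$.

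The main obstacle is the $S$-transformation in (ii), i.e.\ pinning the precise root of unity $\zeta_{12}^9$, since the $q$-product itself is not obviously behaved under $\tau\mapsto-1/\tau$; this is handled cleanly only by passing through Klein forms / Siegel--Weierstrass $\sigma$-machinery, after which (iv) is a formal consequence of (i)--(iii) and the Galois description of $\mathcal{F}_N/\mathcal{F}_1$ from Proposition 2.1.
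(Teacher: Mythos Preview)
Your proposal is correct and follows the standard route, but you should be aware that the paper does not actually supply its own proof of this proposition: its entire argument is the single line ``See \cite{K-S} Propositions 2.4, 2.5 and \cite{K-L} p.~31.'' So there is nothing substantive in the paper to compare against; you have in effect reconstructed the content of those citations.

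That said, your reconstruction matches what one finds in \cite{K-L} Chapter~2: parts (i), (iii), (v) by direct manipulation of the product (\ref{Siegel}); the $T$-transformation in (ii) by inspection; the $S$-transformation by passing through the Klein form $\mathfrak{k}_r$ and its weight~$-1$ automorphy, then dividing by the Dedekind $\eta$-multiplier; and (iv) by combining the $\mathrm{SL}_2$-part from (ii) (the ambiguous $12$th root killed by the exponent $12N/\gcd(6,N)$) with the $G_N$-part read off from the cyclotomic Fourier coefficients. Your identification of the $S$-step as the one genuinely requiring the Klein/$\sigma$-function machinery is exactly right, and your remark that (iv) is then formal given (i)--(iii) and the Galois description of $\mathcal{F}_N/\mathcal{F}_1$ is correct. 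One small point: in (v) you say the infinite product has $q$-order $0$ when $0\le r_1<1$; strictly you should also note that for $r_1=0$ the factor $(1-q_z)=(1-e^{2\pi ir_2})$ is a nonzero constant (since $r\notin\mathbb{Z}^2$), so it does not contribute either.
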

\begin{proof} See \cite{K-S} Propositions 2.4, 2.5
and \cite{K-L} p. 31.
\end{proof}
\begin{remark}
The expression $r\alpha$ in (iv) is well-defined by (i) and (iii).
\end{remark}

\begin{proposition}\label{phiexpression}
\begin{itemize}
\item[(i)] We can express $\varphi(\tau)$ as
\begin{equation*}
\varphi(\tau)=-\frac{1}{\sqrt{2\pi}}\eta(\tau)g_{(\frac{1}{2},\frac{1}{2})}(\tau).
\end{equation*}
\item[(ii)] We dervie
\begin{equation*}
g_{(0,\frac{1}{2})}(\tau) g_{(\frac{1}{2},0)}(\tau)
g_{(\frac{1}{2},\frac{1}{2})}(\tau)=2e^\frac{\pi i}{4}.
\end{equation*}
\item[(iii)] If $m$ ($\geq3$) is an odd integer, then we have the relation
\begin{equation*}
\frac{g_{(\frac{1}{2},\frac{1}{2})}(m\tau)}{g_{(\frac{1}{2},\frac{1}{2})}(\tau)}
=(-1)^\frac{m-1}{2}\prod_{k=1}^{m-1}g_{(\frac{1}{2},\frac{1}{2}+\frac{k}{m})}(\tau).
\end{equation*}
\end{itemize}
\end{proposition}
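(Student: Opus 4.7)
All three identities reduce to direct computation with the product expansions \eqref{eta}, \eqref{phi} and the defining formula \eqref{Siegel} of the Siegel function. The plan is to specialize $g_{(r_1,r_2)}(\tau)$ at the relevant half-integer and $m$-division points, read off the resulting infinite products, and collapse them using elementary product identities.

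For (i), substituting $(r_1,r_2)=(1/2,1/2)$ into \eqref{Siegel} gives $\mathbf{B}_2(1/2)=-1/12$ and $q_z=e^{2\pi i(\tau/2+1/2)}=-q^{1/2}$, so the product in \eqref{Siegel} becomes $(1+q^{1/2})\prod_{n\geq 1}(1+q^{n+1/2})(1+q^{n-1/2})$, which telescopes to $\prod_{n\geq 1}(1+q^{n-1/2})^2$. Combining with $\eta(\tau)$ from \eqref{eta} and comparing with \eqref{phi} yields (i). For (ii), I would perform the analogous specialization for each factor, obtaining
\begin{equation*}
g_{(0,1/2)}(\tau)=2iq^{1/12}\prod_{n\geq 1}(1+q^n)^2,\qquad g_{(1/2,0)}(\tau)=-q^{-1/24}\prod_{n\geq 1}(1-q^{n-1/2})^2,
\end{equation*}
together with $g_{(1/2,1/2)}(\tau)$ from (i). The infinite products collapse via $(1-q^{n-1/2})(1+q^{n-1/2})=1-q^{2n-1}$ and Euler's identity $\prod_{n\geq 1}(1+q^n)\prod_{n\geq 1}(1-q^{2n-1})=1$, while the constants combine to $2ie^{-\pi i/4}=2e^{\pi i/4}$.

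For (iii), I would expand both $g_{(1/2,1/2)}(m\tau)$ and each $g_{(1/2,1/2+k/m)}(\tau)$ via \eqref{Siegel}; at the twisted point $(1/2,1/2+k/m)$ one finds $q_z=-q^{1/2}\zeta_m^k$. The key algebraic input is the cyclotomic factorization
\begin{equation*}
1+T^m=\prod_{k=0}^{m-1}(1+T\zeta_m^k)\qquad(m\text{ odd}),
\end{equation*}
applied with $T=q^{n-1/2}$. Splitting off the $k=0$ term gives $\prod_{k=1}^{m-1}(1+q^{n-1/2}\zeta_m^k)=(1+q^{m(n-1/2)})/(1+q^{n-1/2})$, and the $\zeta_m^{-k}$-product equals the same expression after reindexing $k\mapsto m-k$. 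Taking the product over $n\geq 1$ then reconstructs $g_{(1/2,1/2)}(m\tau)/g_{(1/2,1/2)}(\tau)$ up to constant prefactors.

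The only delicate point is the bookkeeping of those prefactors in (iii). Each $g_{(1/2,1/2+k/m)}(\tau)$ carries a factor $-q^{-1/24}e^{-\pi i/4}e^{-\pi ik/(2m)}$, so after multiplying over $k=1,\ldots,m-1$ and using $\sum_{k=1}^{m-1}k=m(m-1)/2$, the accumulated phase becomes $(-1)^{m-1}e^{-(m-1)\pi i/4}e^{-\pi i(m-1)/4}=e^{-(m-1)\pi i/2}=(-1)^{(m-1)/2}$ for odd $m$, matching the stated sign. This is the step most prone to sign errors, so I would verify each phase contribution separately before combining them.
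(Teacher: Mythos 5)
Your proofs of (i) and (iii) follow essentially the same route as the paper: direct specialization of the Siegel product \eqref{Siegel} at the half-integer points, the telescoping $(1+q^{1/2})\prod_{n\geq1}(1+q^{n+1/2})=\prod_{n\geq1}(1+q^{n-1/2})$, the factorization $\prod_{k=1}^{m-1}(1+X\zeta_m^k)=(1+X^m)/(1+X)$ for odd $m$, and the same phase bookkeeping (your accumulated phase $(-1)^{(m-1)/2}$ and exponent $q^{(1-m)/24}$ agree with the paper's $(-1)^{m-1}e^{\pi i(1-m)/2}q^{(1-m)/24}$). For (ii), however, you take a genuinely different and more elementary route. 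The paper treats $g=g_{(0,\frac{1}{2})}g_{(\frac{1}{2},0)}g_{(\frac{1}{2},\frac{1}{2})}$ as an element of $\mathcal{F}_{48}$, computes $\mathrm{ord}_q(g\circ\alpha)=\frac{1}{2}\mathbf{B}_2(0)+\mathbf{B}_2(\frac{1}{2})=0$ for every $\alpha\in\mathrm{SL}_2(\mathbb{Z})$ via Proposition \ref{transform}(ii),(v), concludes that $g$ is holomorphic on the compact modular curve of level $48$ and hence constant, and only then evaluates the constant by letting $q\to0$. You instead compute all three factors explicitly, observe that the $q$-exponents $\frac{1}{12}-\frac{1}{24}-\frac{1}{24}$ cancel, and collapse $\prod(1+q^n)^2(1-q^{n-1/2})^2(1+q^{n-1/2})^2=\prod(1+q^n)^2(1-q^{2n-1})^2=1$ by Euler's identity; the constants $2i\cdot(-1)\cdot(-e^{-\pi i/4})=2e^{\pi i/4}$ check out. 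Your argument avoids the theory of modular curves entirely and is self-contained at the level of formal power series, at the cost of invoking Euler's identity; the paper's argument is a standard template that generalizes to products of Siegel functions where no such elementary product identity is available. Both are correct.
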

\begin{proof}
(i) By the definition (\ref{Siegel}) we have
\begin{equation*}
g_{(\frac{1}{2},\frac{1}{2})}(\tau)=-q^{\frac{1}{2}\mathbf{B}_2(\frac{1}{2})}e^{-\frac{\pi
i}{4}}(1+q^\frac{1}{2})
\prod_{n=1}^\infty(1+q^{n+\frac{1}{2}})(1+q^{n-\frac{1}{2}})=-e^{-\frac{\pi
i}{4}}q^{-\frac{1}{24}} \prod_{n=1}^\infty(1+q^{n-\frac{1}{2}})^2.
\end{equation*}
One can readily obtain the assertion by the definition (\ref{eta})
of $\eta(\tau)$ and the infinite product expansion (\ref{phi}) of
$\varphi(\tau)$.\\
(ii) Put $g(\tau)=g_{(0,\frac{1}{2})}(\tau)
g_{(\frac{1}{2},0)}(\tau) g_{(\frac{1}{2},\frac{1}{2})}(\tau)$,
which is an element of $\mathcal{F}_{48}$ by Proposition
\ref{integrality}(iii). For any
$\alpha=\begin{pmatrix}a&b\\c&d\end{pmatrix}\in\mathrm{SL}_2(\mathbb{Z})$
we derive that
\begin{eqnarray*}
\mathrm{ord}_q (g(\tau)\circ\alpha)&=& \mathrm{ord}_q\bigg(
g_{(\frac{c}{2},\frac{d}{2})}(\tau)
g_{(\frac{a}{2},\frac{b}{2})}(\tau)
g_{(\frac{a+c}{2},\frac{b+d}{2})}(\tau)\bigg)~\textrm{by Proposition \ref{transform}(ii)}\\
&=&\frac{1}{2}\mathbf{B}_2\bigg(\bigg\langle\frac{c}{2}\bigg\rangle\bigg)
+\frac{1}{2}\mathbf{B}_2\bigg(\bigg\langle\frac{a}{2}\bigg\rangle\bigg)
+\frac{1}{2}\mathbf{B}_2\bigg(\bigg\langle\frac{a+c}{2}\bigg\rangle\bigg)
~\textrm{by Proposition \ref{transform}(v)}\\
&=&\frac{1}{2}\mathbf{B}_2(0)+2\cdot\frac{1}{2}\mathbf{B}_2\bigg(\frac{1}{2}\bigg)~
\textrm{because both $a$ and $c$ cannot be even}\\
&=&0.
\end{eqnarray*}
This observation implies that $g(\tau)$ is holomorphic at every
cusp. Thus $g(\tau)$ is a holomorphic function on the modular curve
of level $48$ (which is a compact Riemann surface, or an algebraic
curve); and hence it must be a constant. It follows that
\begin{eqnarray*}
g(\tau)&=&-2e^{-\frac{3\pi i}{4}}\prod_{n=1}^\infty
(1+q^n)^2(1-q^{n-\frac{1}{2}})^2(1+q^{n-\frac{1}{2}})^2~\textrm{by
the definition (\ref{Siegel})}\\
&=&-2e^{-\frac{3\pi i}{4}}\lim_{q\rightarrow0}\prod_{n=1}^\infty
(1+q^n)^2(1-q^{n-\frac{1}{2}})^2(1+q^{n-\frac{1}{2}})^2\\
&=&2e^\frac{\pi i}{4}.
\end{eqnarray*}
(iii) By the definition (\ref{Siegel}) we have
\begin{eqnarray*}
\frac{g_{(\frac{1}{2},\frac{1}{2})}(m\tau)}{g_{(\frac{1}{2},\frac{1}{2})}(\tau)}
&=&\frac{-q^{\frac{m}{2}\mathbf{B}_2(\frac{1}{2})}e^{-\frac{\pi
i}{4}}(1+q^\frac{m}{2})\prod_{n=1}^\infty(1+q^{mn+\frac{m}{2}})(1+q^{mn-\frac{m}{2}})}
{-q^{\frac{1}{2}\mathbf{B}_2(\frac{1}{2})}e^{-\frac{\pi
i}{4}}(1+q^\frac{1}{2})\prod_{n=1}^\infty(1+q^{n+\frac{1}{2}})(1+q^{n-\frac{1}{2}})}\\
&=&q^{\frac{1-m}{24}}\prod_{n=1}^\infty\bigg(\frac{1+q^{m(n-\frac{1}{2})}}
{1+q^{n-\frac{1}{2}}}\bigg)^2,
\end{eqnarray*}
and
\begin{eqnarray*}
&&\prod_{k=1}^{m-1}g_{(\frac{1}{2},\frac{1}{2}+\frac{k}{m})}(\tau)\\
&=&
\prod_{k=1}^{m-1}\bigg(
-q^{\frac{1}{2}\mathbf{B}_2(\frac{1}{2})}e^{\pi
i(\frac{1}{2}+\frac{k}{m})(-\frac{1}{2})}(1+q^\frac{1}{2}\zeta_m^k)
\prod_{n=1}^\infty(1+q^{n+\frac{1}{2}}\zeta_m^k)(1+q^{n-\frac{1}{2}}\zeta_m^{-k})\bigg)\\
&=&(-1)^{m-1}e^{\pi i\frac{1-m}{2}}q^\frac{1-m}{24}
\prod_{k=1}^{m-1}\prod_{n=1}^\infty(1+q^{n-\frac{1}{2}}\zeta_m^k)(1+q^{n-\frac{1}{2}}\zeta_m^{-k})\\
&=&(-1)^\frac{1-m}{2}q^\frac{1-m}{24}
\prod_{n=1}^\infty\prod_{k=1}^{m-1}(1+q^{n-\frac{1}{2}}\zeta_m^k)^2~\textrm{because
$m$ is odd}\\
&=&(-1)^\frac{1-m}{2}q^\frac{1-m}{24}
\prod_{n=1}^\infty\bigg(\frac{1+q^{m(n-\frac{1}{2})}}
{1+q^{n-\frac{1}{2}}}\bigg)^2~\textrm{by the identity
$\frac{1+X^m}{1+X}=\frac{1-(-X)^m}{1-(-X)}=\prod_{k=1}^{m-1}(1-(-X)\zeta_m^k)$}.
\end{eqnarray*}
This proves (iii).
\end{proof}

\section {Proof of Theorem \ref{main}}\label{proof1}

Let
\begin{equation}\label{Delta}
\Delta(\tau)=\eta(\tau)^{24}=(2\pi)^{12}q\prod_{n=1}^\infty(1-q^n)^{24}
\quad(\tau\in\mathfrak{H})
\end{equation}
be the \textit{modular discriminant function}.

\begin{proposition}\label{singular}
Let $\tau_0\in\mathfrak{H}$ be imaginary quadratic.
\begin{itemize}
\item[(i)] $j(\tau_0)$ is an algebraic integer.
\item[(ii)] Let $a$, $b$ and $d$ be integers with $ad>0$ and $\gcd(a,b,d)=1$.
Then, $a^{12}\Delta((a\tau_0+b)/d)/\Delta(\tau_0)$ is an algebraic
integer dividing $(ad)^{12}$.
\end{itemize}
\end{proposition}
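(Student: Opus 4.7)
My approach derives both statements from the classical theory of modular equations: I will realize $j(\tau_0)$ and $a^{12}\Delta(\alpha\tau_0)/\Delta(\tau_0)$ (with $\alpha=\begin{pmatrix}a&b\\0&d\end{pmatrix}$) as specializations of modular functions that satisfy monic polynomial relations over $\mathbb{Z}[j(\tau)]$, then invoke integrality of such specializations at CM points.

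For part (i), the argument is classical singular-moduli theory. Because $\tau_0$ is imaginary quadratic, its minimal relation $A\tau_0^2+B\tau_0+C=0$ over $\mathbb{Z}$ (with $\gcd(A,B,C)=1$, $A>0$) provides a non-scalar matrix $\alpha_0\in M_2(\mathbb{Z})$ with $\det\alpha_0=n$ fixing $\tau_0$; by passing to a suitable element of the order $\mathbb{Z}[A\tau_0]$ if necessary, one may arrange $n$ to be a positive non-square integer. The classical modular polynomial $\Phi_n(X,Y)\in\mathbb{Z}[X,Y]$, monic in $X$ over $\mathbb{Z}[Y]$ and characterized by $\Phi_n(j(\alpha\tau),j(\tau))\equiv 0$ for every primitive $\alpha$ of determinant $n$, then gives $\Phi_n(j(\tau_0),j(\tau_0))=0$. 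Since $n$ is not a square, $\Phi_n(X,X)$ is (up to sign) a non-constant monic polynomial in $\mathbb{Z}[X]$, and hence $j(\tau_0)\in\overline{\mathbb{Z}}$.

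For part (ii), put $n=ad$, $\alpha=\begin{pmatrix}a&b\\0&d\end{pmatrix}$, and $F_\alpha(\tau)=a^{12}\Delta(\alpha\tau)/\Delta(\tau)$. The weight-$12$ transformation of $\Delta$ under $\mathrm{SL}_2(\mathbb{Z})$ shows that $F_\alpha$ is a modular function lying in some $\mathcal{F}_N$, whose Galois conjugates over $\mathcal{F}_1$ are exactly the $F_{\alpha'}$ with $\alpha'$ running through representatives of primitive integer matrices of determinant $n$ modulo $\mathrm{SL}_2(\mathbb{Z})$-left-multiplication; the symmetric product $\prod_{\alpha'}(X-F_{\alpha'}(\tau))$ then lies in $\mathbb{Z}[j(\tau)][X]$, so specializing at $\tau_0$ and using (i) yields $F_\alpha(\tau_0)\in\overline{\mathbb{Z}}$. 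For the divisibility, observe that the adjugate $\alpha^*=\begin{pmatrix}d&-b\\0&a\end{pmatrix}$ satisfies $\alpha^*\alpha=nI_2$, so $\alpha^*(\alpha\tau)=\tau$ and consequently
$$F_\alpha(\tau)\cdot F_{\alpha^*}(\alpha\tau)=a^{12}d^{12}\cdot\frac{\Delta(\alpha\tau)}{\Delta(\tau)}\cdot\frac{\Delta(\tau)}{\Delta(\alpha\tau)}=(ad)^{12}.$$
Since $\alpha^*$ satisfies the same hypothesis as $\alpha$ and $\alpha\tau_0$ is imaginary quadratic in the same field, the previous argument applied to the pair $(\alpha^*,\alpha\tau_0)$ gives $F_{\alpha^*}(\alpha\tau_0)\in\overline{\mathbb{Z}}$, whence $F_\alpha(\tau_0)$ divides $(ad)^{12}$.

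The main technical obstacle is verifying that the elementary symmetric functions of $\{F_{\alpha'}(\tau)\}$ have Fourier coefficients in $\mathbb{Z}$ rather than merely in $\mathbb{Z}[\zeta_n]$ (the individual $F_{\alpha'}$ having $q$-coefficients in $\mathbb{Z}[\zeta_n]$); this descent step uses the $G_n\subset\mathrm{Gal}(\mathcal{F}_n/\mathcal{F}_1)$ action on $q$-expansions together with the $q$-expansion principle, exactly as in the proof of the classical modular polynomial. The entire argument is worked out in detail in the standard references, notably \cite{Lang}.
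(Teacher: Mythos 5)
Your argument is correct: part (i) is the standard Kronecker argument via the modular polynomial $\Phi_n(X,X)$ for a primitive fixing matrix of non-square determinant, and part (ii) correctly combines the integrality of the $\Delta$-quotients (via the monic polynomial $\prod_{\alpha'}(X-F_{\alpha'})\in\mathbb{Z}[j][X]$, using that the automorphy factors make the set $\{a'^{12}\Delta(\alpha'\tau)/\Delta(\tau)\}$ Galois-stable) with the adjugate identity $F_\alpha(\tau_0)F_{\alpha^*}(\alpha\tau_0)=(ad)^{12}$. The paper itself gives no proof and simply cites Lang, Chapter 5 Theorem 4 and Chapter 12 Theorem 4; what you have written is essentially a faithful reconstruction of exactly those cited classical proofs, so the approaches coincide.
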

\begin{proof}
See \cite{Lang} Chapter 5 Theorem 4 and Chapter 12 Theorem 4.
\end{proof}

\begin{proposition}\label{corollary}
Let $m$ be a positive integer and $\tau_0\in\mathfrak{H}$ be
imaginary quadratic.
\begin{itemize}
\item[(i)]
$\sqrt{m}\eta(m\tau_0)/\eta(\tau_0)$ is an algebraic integer
dividing $\sqrt{m}$.
\item[(ii)] $2g_{(\frac{1}{2},\frac{1}{2})}(m\tau_0)/
g_{(\frac{1}{2},\frac{1}{2})}(\tau_0)$ is an algebraic integer
dividing 4. In particular, if $m$ is odd, then
$g_{(\frac{1}{2},\frac{1}{2})}(m\tau_0)/g_{(\frac{1}{2},\frac{1}{2})}(\tau_0)$
is a unit.
\end{itemize}
\end{proposition}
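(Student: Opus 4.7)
The plan is to reduce each claim to the integrality and unit statements of Propositions \ref{singular} and \ref{integrality}, using the identities of Proposition \ref{phiexpression} as bridges between $\eta$, $\Delta$ and Siegel functions.

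For (i), I would apply Proposition \ref{singular}(ii) with $(a,b,d)=(m,0,1)$, which satisfies $\gcd(m,0,1)=1$. Since $\Delta=\eta^{24}$, this says
\begin{equation*}
\Bigl(\frac{\sqrt{m}\,\eta(m\tau_0)}{\eta(\tau_0)}\Bigr)^{24}=\frac{m^{12}\Delta(m\tau_0)}{\Delta(\tau_0)}
\end{equation*}
is an algebraic integer dividing $m^{12}=(\sqrt{m})^{24}$. Writing $\alpha=\sqrt{m}\,\eta(m\tau_0)/\eta(\tau_0)$, both $\alpha^{24}$ and $m^{12}/\alpha^{24}=(\sqrt{m}/\alpha)^{24}$ are then algebraic integers, and the monic polynomials $x^{24}-\alpha^{24}$ and $x^{24}-m^{12}/\alpha^{24}$ show that $\alpha$ and $\sqrt{m}/\alpha$ are themselves algebraic integers.

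For (ii) in the general case, I rearrange Proposition \ref{phiexpression}(ii) into the form $2/g_{(\frac{1}{2},\frac{1}{2})}(\tau)=g_{(0,\frac{1}{2})}(\tau)\,g_{(\frac{1}{2},0)}(\tau)/e^{\pi i/4}$, so that
\begin{equation*}
\frac{2\,g_{(\frac{1}{2},\frac{1}{2})}(m\tau_0)}{g_{(\frac{1}{2},\frac{1}{2})}(\tau_0)}=\frac{g_{(\frac{1}{2},\frac{1}{2})}(m\tau_0)\,g_{(0,\frac{1}{2})}(\tau_0)\,g_{(\frac{1}{2},0)}(\tau_0)}{e^{\pi i/4}},
\end{equation*}
and symmetrically $2g_{(\frac{1}{2},\frac{1}{2})}(\tau_0)/g_{(\frac{1}{2},\frac{1}{2})}(m\tau_0)$ has the same shape with the roles of $\tau_0$ and $m\tau_0$ swapped. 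Since $\tau_0$ and $m\tau_0$ are both imaginary quadratic, $j(\tau_0)$ and $j(m\tau_0)$ are algebraic integers by Proposition \ref{singular}(i), so Proposition \ref{integrality}(i) makes each Siegel-function value on the right an algebraic integer, while $e^{\pi i/4}=\zeta_8$ is a unit. Consequently both the quantity $2g_{(\frac{1}{2},\frac{1}{2})}(m\tau_0)/g_{(\frac{1}{2},\frac{1}{2})}(\tau_0)$ and $4$ divided by it are algebraic integers, which is exactly the claim.

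For the sharper statement when $m$ is odd, the case $m=1$ is trivial, and for odd $m\geq 3$ I invoke Proposition \ref{phiexpression}(iii) to express the ratio as $(-1)^{(m-1)/2}\prod_{k=1}^{m-1}g_{(\frac{1}{2},\,\frac{1}{2}+\frac{k}{m})}(\tau_0)$. A short calculation shows that $(1/2,\,1/2+k/m)$ has primitive denominator $2m/\gcd(m,k)$, which for odd $m\geq 3$ and $1\leq k\leq m-1$ equals $2$ times the odd integer $m/\gcd(m,k)\geq 3$, hence is composite. Proposition \ref{integrality}(ii) then makes each $g_{(\frac{1}{2},\frac{1}{2}+k/m)}(\tau)$ a modular unit over $\mathbb{Z}$, so its value at the imaginary-quadratic point $\tau_0$ is simultaneously an algebraic integer and the inverse of one, i.e., a unit. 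The product of these units is again a unit, giving the stronger conclusion. The only genuine bookkeeping step I anticipate is the primitive-denominator computation; the rest is a direct application of the propositions already cited.
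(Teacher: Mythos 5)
Your proposal is correct and follows essentially the same route as the paper: Proposition \ref{singular}(ii) with $(a,b,d)=(m,0,1)$ plus a $24^{\textrm{th}}$ root for (i), the identity of Proposition \ref{phiexpression}(ii) combined with Propositions \ref{integrality}(i) and \ref{singular}(i) for the ``divides $4$'' claim, and Propositions \ref{phiexpression}(iii) and \ref{integrality}(ii) (composite primitive denominator $2m/\gcd(m,k)$) for the unit statement when $m$ is odd. The only cosmetic difference is that you establish divisibility by $4$ via the symmetric expression for $2g_{(\frac{1}{2},\frac{1}{2})}(\tau_0)/g_{(\frac{1}{2},\frac{1}{2})}(m\tau_0)$, whereas the paper shows each of the two factors $g_{(0,\frac{1}{2})}(\tau_0)g_{(\frac{1}{2},0)}(\tau_0)$ and $g_{(\frac{1}{2},\frac{1}{2})}(m\tau_0)$ divides $2$; both are immediate consequences of the same constant identity.
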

\begin{proof}
(i) Applying Proposition \ref{singular}(ii) with $(a,b,d)=(m,0,1)$,
wee see that
\begin{equation*}
m^{12}\frac{\Delta(m\tau_0)} {\Delta(\tau_0)} =\bigg(\sqrt{m}
\frac{\eta(m\tau_0)}{\eta(\tau_0)}\bigg)^{24}
\end{equation*}
is an algebraic integer dividing $m^{12}$. We get the assertion by taking $24^\textrm{th}$ root.\\
(ii) We obtain from Proposition \ref{phiexpression}(ii) that
\begin{eqnarray*}
2\frac{g_{(\frac{1}{2},\frac{1}{2})}(m\tau_0)}
{g_{(\frac{1}{2},\frac{1}{2})}(\tau_0)}&=&e^{-\frac{\pi
i}{4}}g_{(0,\frac{1}{2})}(\tau_0) g_{(\frac{1}{2},0)}(\tau_0)
g_{(\frac{1}{2},\frac{1}{2})}(\tau_0)\frac{g_{(\frac{1}{2},\frac{1}{2})}(m\tau_0)}
{g_{(\frac{1}{2},\frac{1}{2})}(\tau_0)}\\
&=&e^{-\frac{\pi i}{4}}\bigg(g_{(0,\frac{1}{2})}(\tau_0)
g_{(\frac{1}{2},0)}(\tau_0)\bigg)
g_{(\frac{1}{2},\frac{1}{2})}(m\tau_0).
\end{eqnarray*}
By Propositions \ref{integrality}(i) and \ref{singular}(i), the
values $g_{(0,\frac{1}{2})}(\tau_0)g_{(\frac{1}{2},0)}(\tau_0)$,
$g_{(\frac{1}{2},\frac{1}{2})}(\tau_0)$,
$g_{(0,\frac{1}{2})}(m\tau_0)g_{(\frac{1}{2},0)}(m\tau_0)$ and
$g_{(\frac{1}{2},\frac{1}{2})}(m\tau_0)$ are algebraic integers.
Moreover, since
\begin{equation*}
\bigg(g_{(0,\frac{1}{2})}(\tau_0)g_{(\frac{1}{2},0)}(\tau_0)\bigg)
g_{(\frac{1}{2},\frac{1}{2})}(\tau_0)=
\bigg(g_{(0,\frac{1}{2})}(m\tau_0)g_{(\frac{1}{2},0)}(m\tau_0)\bigg)
g_{(\frac{1}{2},\frac{1}{2})}(m\tau_0)=2e^\frac{\pi i}{4}
\end{equation*}
by Proposition \ref{phiexpression}(ii), both
$g_{(0,\frac{1}{2})}(\tau_0) g_{(\frac{1}{2},0)}(\tau_0)$ and
$g_{(\frac{1}{2},\frac{1}{2})}(m\tau_0)$ are algebraic integers
dividing $2$. Hence
$2g_{(\frac{1}{2},\frac{1}{2})}(m\tau_0)/g_{(\frac{1}{2},\frac{1}{2})}(\tau_0)$
is an algebraic integer dividing $2\cdot2=4$.
\par
Now, suppose that $m$ ($\geq3$) is odd. Recall the relation
\begin{equation*}
\frac{g_{(\frac{1}{2},\frac{1}{2})}(m\tau)}{g_{(\frac{1}{2},\frac{1}{2})}(\tau)}
=(-1)^\frac{m-1}{2}\prod_{k=1}^{m-1}g_{(\frac{1}{2},\frac{1}{2}+\frac{k}{m})}(\tau)
\end{equation*}
given in Proposition \ref{phiexpression}(iii). Since each vector
$(\frac{1}{2},\frac{1}{2}+\frac{k}{m})$ has the composite primitive
denominator, $g_{(\frac{1}{2},\frac{1}{2}+\frac{k}{m})}(\tau)$ is a
modular unit over $\mathbb{Z}$ by Proposition \ref{integrality}(ii);
and hence so is
$g_{(\frac{1}{2},\frac{1}{2})}(m\tau)/g_{(\frac{1}{2},\frac{1}{2})}(\tau)$.
Therefore,
$g_{(\frac{1}{2},\frac{1}{2})}(m\tau_0)/g_{(\frac{1}{2},\frac{1}{2})}(\tau_0)$
is a unit by Proposition \ref{singular}(i).
\end{proof}

We are ready to prove Theorem \ref{main}. Let $m$ be a positive
integer and $\tau_0\in\mathfrak{H}$ be imaginary quadratic. By
Proposition \ref{phiexpression}(i) we can express
\begin{equation*}
2\sqrt{m}\frac{\varphi(m\tau_0)}{\varphi(\tau_0)}=
\sqrt{m}\frac{\eta(m\tau_0)}
{\eta(\tau_0)}\cdot2\frac{g_{(\frac{1}{2},\frac{1}{2})}(m\tau_0)}
{g_{(\frac{1}{2},\frac{1}{2})}(\tau_0)}.
\end{equation*}
Hence $2\sqrt{m}\varphi(m\tau_0)/\varphi(\tau_0)$ is an algebraic
integer dividing $4\sqrt{m}$ by Proposition \ref{corollary}(i) and
(ii). Similarly, if $m$ is odd, then
\begin{equation}\label{expression3}
\sqrt{m}\frac{\varphi(m\tau_0)}{\varphi(\tau_0)}=
\sqrt{m}\frac{\eta(m\tau_0)}
{\eta(\tau_0)}\cdot\frac{g_{(\frac{1}{2},\frac{1}{2})}(m\tau_0)}
{g_{(\frac{1}{2},\frac{1}{2})}(\tau_0)}
\end{equation}
is an algebraic integer dividing $\sqrt{m}$. This completes the
proof of Theorem \ref{main}.
\par
Now, we revisit and improve Theorem \ref{BCZ} as a corollary.

\begin{corollary}\label{cor1}
Let $m$ and $n$ be positive integers. If $m$ is odd, then
$\sqrt{m}\varphi(mni)/\varphi(ni)$ is an algebraic integer dividing
$\sqrt{m}$, while if $m$ is even, then
$2\sqrt{m}\varphi(mni)/\varphi(ni)$ is an algebraic integer dividing
$4\sqrt{m}$.
\end{corollary}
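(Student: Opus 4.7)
The proof of this corollary should be essentially immediate from Theorem \ref{main}, since it is just the specialization $\tau_0=ni$ of that theorem. My plan is therefore to verify that this specialization is legitimate and then quote Theorem \ref{main} directly.

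First I would observe that for any positive integer $n$, the point $\tau_0=ni$ lies in $\mathfrak{H}$ (its imaginary part is $n>0$) and is imaginary quadratic, because it generates the imaginary quadratic field $\mathbb{Q}(i)$ over $\mathbb{Q}$ (equivalently, it satisfies $\tau_0^2+n^2=0$, a monic polynomial of degree $2$ over $\mathbb{Q}$). This is the only hypothesis of Theorem \ref{main} that needs to be checked.

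Next, for $m$ odd, I would apply the ``in particular'' clause of Theorem \ref{main} with this $\tau_0=ni$ to conclude that $\sqrt{m}\varphi(mni)/\varphi(ni)$ is an algebraic integer dividing $\sqrt{m}$. For $m$ even, I would apply the first statement of Theorem \ref{main} to conclude that $2\sqrt{m}\varphi(mni)/\varphi(ni)$ is an algebraic integer dividing $4\sqrt{m}$.

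There is no real obstacle here: Theorem \ref{main} is strictly more general than the previous result from \cite{B-C-Z}, since it allows \emph{any} imaginary quadratic $\tau_0\in\mathfrak{H}$ rather than only the points of the form $ni$, and it sharpens the divisibility statement in the odd case from ``dividing $2\sqrt{m}$'' to ``dividing $\sqrt{m}$''. The corollary simply packages these two consequences in the original formulation of \cite{B-C-Z}.
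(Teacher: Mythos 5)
Your proposal is correct and follows exactly the paper's own one-line proof, which simply sets $\tau_0=ni$ in Theorem \ref{main}; your additional check that $ni$ is an imaginary quadratic point of $\mathfrak{H}$ is a reasonable (if routine) bit of diligence. Nothing more is needed.
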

\begin{proof}
We get the assertion by setting $\tau_0=ni$ in Theorem \ref{main}.
\end{proof}

\begin{remark}
Berndt-Chan-Zhang used only Proposition \ref{singular}(ii) in order
to achieve Theorem \ref{BCZ}.
\end{remark}

\section{Proof of Theorem \ref{unitcriterion}}\label{proof2}

\begin{proposition}\label{morerelation}
Let $m$ ($\geq2$) be an integer.
\begin{itemize}
\item[(i)] We have the relation
\begin{equation*}
\prod_{\begin{smallmatrix}a,b\in\mathbb{Z}\\
0\leq a,b<m,~(a,b)\neq(0,0)\end{smallmatrix}}
g_{(\frac{a}{m},\frac{b}{m})}(\tau)^{12m}=m^{12m}.
\end{equation*}
\item[(ii)] We derive
\begin{equation*}
\prod_{k=1}^{m-1} g_{(0,\frac{k}{m})}(\tau)
=i^{m-1}\bigg(\sqrt{m}\frac{\eta(m\tau)}{\eta(\tau)}\bigg)^{2}.
\end{equation*}
\end{itemize}
\end{proposition}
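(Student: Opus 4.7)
The plan is to prove (ii) by direct expansion of the defining infinite products, and to prove (i) by showing that the product is a constant modular function and then evaluating the constant.

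\textbf{For (ii)}, I would start from the definition (\ref{Siegel}) with $r_1=0$ and $r_2=k/m$, which gives
\begin{equation*}
g_{(0,k/m)}(\tau)=-q^{1/12}e^{-\pi ik/m}(1-\zeta_m^k)\prod_{n=1}^\infty(1-q^n\zeta_m^k)(1-q^n\zeta_m^{-k}).
\end{equation*}
Multiplying over $k=1,\dots,m-1$ and applying the two classical cyclotomic identities $\prod_{k=1}^{m-1}(1-\zeta_m^k)=m$ and $\prod_{k=0}^{m-1}(1-X\zeta_m^k)=1-X^m$ collapses the inner double product to $\prod_{n=1}^\infty\bigl((1-q^{mn})/(1-q^n)\bigr)^2$. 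The scalar prefactor simplifies via $(-1)^{m-1}(-i)^{m-1}=i^{m-1}$, and comparing with the expansion of $(\sqrt{m}\,\eta(m\tau)/\eta(\tau))^2$ coming from (\ref{eta}) yields (ii) by inspection.

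\textbf{For (i)}, set $h(\tau)=\prod_{(a,b)\not\equiv(0,0)}g_{(a/m,b/m)}(\tau)^{12m}$, where $(a,b)$ ranges over non-zero classes modulo $m$. The approach is a three-step descent. First, $h\in\mathcal{F}_1=\mathbb{Q}(j(\tau))$: raising to the exponent $12m$ simultaneously kills the twelfth-root-of-unity cocycle of Proposition \ref{transform}(ii) and the root-of-unity shift of Proposition \ref{transform}(iii), so $g_{(a/m,b/m)}(\tau)^{12m}$ depends only on the class of $(a,b)$ modulo $m$ and satisfies $(g_r^{12m})\circ\gamma=g_{r\gamma}^{12m}$ for every $\gamma\in\mathrm{SL}_2(\mathbb{Z})$. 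Since right multiplication by any such $\gamma$ or by any $\alpha\in G_m$ (Proposition \ref{transform}(iv)) permutes the set of non-zero classes, $h$ is fixed by the whole group $\mathrm{Gal}(\mathcal{F}_m/\mathcal{F}_1)$.

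Second, $h$ is holomorphic at every cusp: by $\mathrm{SL}_2(\mathbb{Z})$-invariance it suffices to check $\infty$, where the order formula of Proposition \ref{transform}(v) combined with the Bernoulli distribution relation $\sum_{a=0}^{m-1}\mathbf{B}_2(a/m)=1/(6m)$ yields $\mathrm{ord}_q h=0$. Hence $h$ is a level-one modular function with no poles on $\mathfrak{H}\cup\{\infty\}$, and therefore a constant. Third, I would read off this constant from the leading $q$-coefficients: for $a=0$, $b\neq 0$ the leading term is $-e^{-\pi ib/m}(1-\zeta_m^b)=2i\sin(\pi b/m)$, and $\prod_{b=1}^{m-1}2i\sin(\pi b/m)=i^{m-1}m$ contributes $(i^{m-1}m)^{12m}=m^{12m}$; for $a\neq 0$ the leading term $-e^{\pi i(b/m)(a/m-1)}$ is a pure root of unity whose $12m$-th power contributes trivially after summing exponents in $b$. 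This gives $h=m^{12m}$, proving (i).

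The main obstacle I anticipate is the bookkeeping in the last step: verifying cleanly that every root-of-unity factor arising from the $a\neq 0$ blocks becomes trivial after raising to the $12m$-th power. This is a routine exponent-parity argument, but it needs care; once it is in place, the identity falls out of the two classical cyclotomic product formulas together with the Bernoulli distribution relation.
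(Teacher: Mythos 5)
Your proof of (ii) is exactly the paper's: expand the definition (\ref{Siegel}) at $r=(0,k/m)$, apply $\prod_{k=1}^{m-1}(1-X\zeta_m^k)=(1-X^m)/(1-X)$ and $\prod_{k=1}^{m-1}(1-\zeta_m^k)=m$, and match the prefactor $i^{m-1}m\,q^{(m-1)/12}$ against $\big(\sqrt{m}\,\eta(m\tau)/\eta(\tau)\big)^2$. For (i) you diverge: the paper gives no argument at all, citing only the Example on p.~45 of Kubert--Lang, whereas you supply a self-contained proof, and it is correct. The three steps all check out: $g_r^{12m}$ depends only on $r\bmod\mathbb{Z}^2$ (the shift factor $(-1)^{s_1s_2+s_1+s_2}e^{-\pi i(s_1r_2-s_2r_1)}$ is a $2m$-th root of unity for $r\in\frac{1}{m}\mathbb{Z}^2$), so invariance under $\mathrm{SL}_2(\mathbb{Z})$ and under $G_m$ follows from the permutation of nonzero classes and Proposition \ref{transform}(ii),(iv); the order at $\infty$ is $12m\big(\tfrac{m}{2}\sum_{a=0}^{m-1}\mathbf{B}_2(a/m)-\tfrac{1}{2}\mathbf{B}_2(0)\big)=12m\big(\tfrac{1}{12}-\tfrac{1}{12}\big)=0$, so together with Proposition \ref{integrality}(iii) the function is a holomorphic level-one modular function, hence constant; and the constant is $\big(\prod_{b=1}^{m-1}2i\sin(\pi b/m)\big)^{12m}=(i^{m-1}m)^{12m}=m^{12m}$, since for each fixed $a\neq0$ the $b$-block contributes $\prod_{b=0}^{m-1}e^{12\pi iab/m}=e^{6\pi ia(m-1)}=1$ (note the triviality holds only after taking the product over $b$, not factor by factor, so your insistence on ``summing exponents in $b$'' is the right caveat). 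What your route buys is a proof within the toolkit the paper already assembles (Propositions \ref{transform} and \ref{integrality}) at the cost of the Bernoulli distribution relation $\sum_{a=0}^{m-1}\mathbf{B}_2(a/m)=1/(6m)$ as an extra input; the paper's citation presumably leans on the Klein-form/$\Delta$ machinery of Kubert--Lang instead.
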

\begin{proof}
(i) See \cite{K-L} p. 45 Example.\\
(ii) We deduce that
\begin{eqnarray*}
\prod_{k=1}^{m-1}g_{(0,~\frac{k}{m})}(\tau)&=&
\prod_{k=1}^{m-1}\bigg(-q^{\frac{1}{2}\mathbf{B}_2(0)}
\zeta_{2m}^{-k}(1-\zeta_m^k)\prod_{n=1}^\infty
(1-q^n\zeta_m^k)(1-q^n\zeta_m^{-k})\bigg)~\textrm{by the definition
(\ref{Siegel})}\\
&=&i^{m-1}mq^\frac{m-1}{12}\prod_{n=1}^\infty\bigg(\frac{1-q^{mn}}{1-q^n}\bigg)^2\\
&&\textrm{by the identity
$\frac{1-X^m}{1-X}=1+X+\cdots+X^{m-1}=\prod_{k=1}^{m-1}(1-X\zeta_m^k)$}\\
&=&i^{m-1}\bigg(\sqrt{m}\frac{\eta(m\tau)}{\eta(\tau)}\bigg)^2~\textrm{by
the definition (\ref{eta})}.
\end{eqnarray*}
\end{proof}
\begin{remark}
Let $\tau_0\in\mathfrak{H}$ be imaginary quadratic. By Propositions
\ref{integrality}(i), \ref{singular}(i) and \ref{morerelation}(i),
$\prod_{k=1}^{m-1} g_{(0,\frac{k}{m})}(\tau_0)$ is an algebraic
integer dividing $m$. It follows from Proposition
\ref{morerelation}(ii) that $\sqrt{m}\eta(m\tau_0)/\eta(\tau_0)$ is
an algebraic integer dividing $\sqrt{m}$. This gives another proof
of Proposition \ref{corollary}(i).
\end{remark}
From now on, we let $K$ be an imaginary quadratic field and
$\theta_K$ be as in (\ref{theta}). We denote $H_K$ and $K_{(N)}$ the
Hilbert class field and the ray class field modulo $N$ ($\geq1$) of
$K$, respectively.

\begin{proposition}[Main theorem of complex
multiplication]\label{CM} We have
\begin{equation*}
K_{(N)}=K\mathcal{F}_N(\theta_K)=K\bigg(h(\theta_K)~:~h\in\mathcal{F}_N~\textrm{is
defined and finite at $\theta_K$}\bigg).
\end{equation*}
\end{proposition}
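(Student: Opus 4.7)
The plan is to deduce the statement from the standard first main theorem of complex multiplication together with Shimura's reciprocity law, which is Proposition \ref{Stev} alluded to in the introduction. The first main theorem, $H_K = K(j(\theta_K))$, handles the level $1$ case (since $\mathcal{F}_1 = \mathbb{Q}(j(\tau))$); the job is to upgrade this to arbitrary $N$ using the Galois description of $\mathcal{F}_N/\mathcal{F}_1$ as $\mathrm{GL}_2(\mathbb{Z}/N\mathbb{Z})/\{\pm 1_2\}$. I would prove the two containments separately.

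For the containment $K\mathcal{F}_N(\theta_K) \subseteq K_{(N)}$, fix $h \in \mathcal{F}_N$ finite at $\theta_K$. By Shimura's reciprocity, for each ideal $\mathfrak{a}$ of $\mathcal{O}_K$ coprime to $N$ there is an element $g(\mathfrak{a}) \in \mathrm{GL}_2(\mathbb{Z}/N\mathbb{Z})/\{\pm 1_2\}$, computed from the action of $\mathfrak{a}$ on the basis $\{\theta_K,1\}$ of $\mathcal{O}_K$, such that the Artin automorphism attached to $\mathfrak{a}$ sends $h(\theta_K)$ to $(h \circ g(\mathfrak{a}))(\theta_K)$. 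Principal ideals $(\alpha)$ with $\alpha \equiv 1 \pmod{N\mathcal{O}_K}$ correspond to the identity matrix (modulo $\pm 1_2$), so their Artin symbols fix $h(\theta_K)$. By class field theory, this forces $h(\theta_K) \in K_{(N)}$.

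For the reverse containment $K_{(N)} \subseteq K\mathcal{F}_N(\theta_K)$, I would count. The finite Galois group $\mathrm{Gal}(K\mathcal{F}_N(\theta_K)/K)$ acts on the generating values $h(\theta_K)$ via the same reciprocity formula. Distinct cosets of the ray class group mod $N$ correspond to distinct cosets $g(\mathfrak{a}) \bmod \mathrm{stabilizer}$, which produce genuinely different automorphisms of $\mathcal{F}_N$, hence genuinely different actions on the collection of values $h(\theta_K)$ as $h$ ranges over $\mathcal{F}_N$. Combined with the level $1$ statement $H_K \subseteq K(j(\theta_K))$, this shows $[K\mathcal{F}_N(\theta_K):K]$ is at least $|\mathrm{Cl}_N(K)| = [K_{(N)}:K]$, which together with the first containment yields equality.

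The main obstacle is the reciprocity formula itself: identifying how an idele (or ideal) $\mathfrak{a}$ of $K$ produces a matrix in $\mathrm{GL}_2(\mathbb{Z}/N\mathbb{Z})$ via the $\mathbb{Z}$-basis $\{\theta_K,1\}$ and verifying that this identification intertwines the Artin map with the $\mathrm{GL}_2$-action on $\mathcal{F}_N$ described in the first proposition of the paper. This is a nontrivial compatibility between class field theory on the $K$-side and the Galois theory of modular function fields on the $\mathrm{GL}_2$-side, and is precisely the content of Shimura's reciprocity law; I would import it as a black box rather than reprove it.
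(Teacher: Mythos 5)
The paper does not actually prove this proposition: the stated ``proof'' is a citation to Lang, \emph{Elliptic Functions}, Chapter 10 (Corollary to Theorem 2) and Shimura, Chapter 6, so there is no internal argument to compare yours against. Judged on its own terms, your sketch has two genuine problems. First, it is circular relative to the tool you propose to import. The reciprocity law as the paper states it (Proposition \ref{Stev}) is a surjection $W_{K,N}\rightarrow\mathrm{Gal}(K_{(N)}/H_K)$ sending $\alpha$ to the map $h(\theta_K)\mapsto h^\alpha(\theta_K)$; for that to be a well-formed statement one must already know that every value $h(\theta_K)$ lies in $K_{(N)}$ and that these values generate it over $H_K$ --- that is, one must already know Proposition \ref{CM}, which is why the paper places \ref{CM} before \ref{Stev}. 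Even your weaker formulation, ``the Artin automorphism attached to $\mathfrak{a}$ sends $h(\theta_K)$ to $(h\circ g(\mathfrak{a}))(\theta_K)$,'' presupposes that $h(\theta_K)$ lies in an abelian extension of $K$, which is itself a nontrivial part of the theorem. So importing reciprocity as a black box either imports the theorem itself or imports a statement that cannot be formulated without it.

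Second, the reverse containment has a concrete gap. You assert that distinct ray classes yield ``genuinely different automorphisms of $\mathcal{F}_N$, hence genuinely different actions on the collection of values $h(\theta_K)$.'' That implication is false in general: specialization at $\theta_K$ can collapse distinct automorphisms of $\mathcal{F}_N$, and the paper's own display (\ref{kernel}) records exactly this phenomenon --- for $K=\mathbb{Q}(\sqrt{-1})$ the matrix $\left(\begin{smallmatrix}0&-1\\1&0\end{smallmatrix}\right)$ induces a nontrivial automorphism of $\mathcal{F}_N$ yet fixes every value $h(i)$, since $-1/i=i$. Quotienting by ``the stabilizer'' does not rescue the count unless you prove that the stabilizer is no larger than the contribution of $\mathcal{O}_K^*$; that injectivity statement (equivalently, the exact determination of the kernel (\ref{kernel})) is precisely the hard content of the main theorem and is normally established by a splitting-of-primes or degree argument, not by the formal structure of the reciprocity map. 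As written, your lower bound $[K\mathcal{F}_N(\theta_K):K]\geq[K_{(N)}:K]$ is unproved, and the clean fix is to do what the paper does: take the full main theorem of complex multiplication from Lang or Shimura as the cited input rather than attempt to rederive it from its own corollary.
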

\begin{proof}
See \cite{Lang} Chapter 10 Corollary to Theorem 2 or \cite{Shimura}
Chapter 6.
\end{proof}

\begin{corollary}
If $m$ ($\geq3$) is an odd integer, then
$(\sqrt{m}\varphi(m\theta_K)/\varphi(\theta_K))^{2}$ lies in
$K_{(48m^2)}$.
\end{corollary}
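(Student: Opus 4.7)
My plan is to rewrite $(\sqrt{m}\varphi(m\tau)/\varphi(\tau))^{2}$ as the value at $\theta_K$ of a single modular function $h \in \mathcal{F}_{48m^2}$ and then invoke the main theorem of complex multiplication (Proposition \ref{CM}), which identifies $K_{(48m^2)}$ with the field generated over $K$ by values at $\theta_K$ of the functions in $\mathcal{F}_{48m^2}$ that are finite there.

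First, squaring identity (\ref{expression3}) from the proof of Theorem \ref{main} yields
\[
\left(\sqrt{m}\,\frac{\varphi(m\tau)}{\varphi(\tau)}\right)^{2} = \left(\sqrt{m}\,\frac{\eta(m\tau)}{\eta(\tau)}\right)^{2}\left(\frac{g_{(\frac{1}{2},\frac{1}{2})}(m\tau)}{g_{(\frac{1}{2},\frac{1}{2})}(\tau)}\right)^{2}.
\]
Then I would use Proposition \ref{morerelation}(ii) to convert the first factor into $(-1)^{(m-1)/2}\prod_{k=1}^{m-1}g_{(0,\frac{k}{m})}(\tau)$ (noting that $i^{1-m}=(-1)^{(m-1)/2}$ since $m$ is odd), and square Proposition \ref{phiexpression}(iii), which conveniently absorbs its own $(-1)^{(m-1)/2}$ sign, to rewrite the second factor as $\prod_{k=1}^{m-1}g_{(\frac{1}{2},\frac{1}{2}+\frac{k}{m})}(\tau)^{2}$. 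Combining these gives
\[
h(\tau) := (-1)^{(m-1)/2}\prod_{k=1}^{m-1}g_{(0,\frac{k}{m})}(\tau)\prod_{k=1}^{m-1}g_{(\frac{1}{2},\frac{1}{2}+\frac{k}{m})}(\tau)^{2} = \left(\sqrt{m}\,\frac{\varphi(m\tau)}{\varphi(\tau)}\right)^{2}.
\]

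Every vector $(0,k/m)$ and $(\tfrac{1}{2},\tfrac{1}{2}+\tfrac{k}{m})$ lies in $\tfrac{1}{2m}\mathbb{Z}^{2}\setminus\mathbb{Z}^{2}$, so by Proposition \ref{integrality}(iii) each Siegel factor belongs to $\mathcal{F}_{12(2m)^{2}}=\mathcal{F}_{48m^{2}}$ and is holomorphic and nonzero on $\mathfrak{H}$. Hence $h \in \mathcal{F}_{48m^{2}}$ is defined and finite at $\theta_K$, and Proposition \ref{CM} delivers $h(\theta_K) \in K_{(48m^{2})}$, as required. The only delicate step is the choice of common level: it is dictated by the largest primitive denominator (namely $2m$) among the vectors appearing in the product, and fortunately this matches the $48m^{2}=12(2m)^{2}$ in the claim, so no further adjustment is needed and no substantial obstacle arises once the two conversion identities are combined.
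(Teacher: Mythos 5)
Your proposal is correct and follows essentially the same route as the paper: the paper likewise combines Proposition \ref{phiexpression}(i) (equivalently, the square of (\ref{expression3})) with Propositions \ref{morerelation}(ii) and \ref{phiexpression}(iii) to obtain the product expression $(-1)^{\frac{1-m}{2}}\prod_{k=1}^{m-1} g_{(0,\frac{k}{m})}(\tau)\, g_{(\frac{1}{2},\frac{1}{2}+\frac{k}{m})}(\tau)^2$, places it in $\mathcal{F}_{48m^2}$ via Proposition \ref{integrality}(iii), and concludes with Proposition \ref{CM}. Your additional remark on why the common level $12(2m)^2=48m^2$ works is a harmless elaboration of what the paper leaves implicit.
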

\begin{proof}
We see that
\begin{eqnarray}
\bigg(\sqrt{m}\frac{\varphi(m\tau)}{\varphi(\tau)}\bigg)^2
&=&\bigg(\sqrt{m}
\frac{\eta(m\tau)}{\eta(\tau)}\bigg)^2\bigg(\frac{g_{(\frac{1}{2},\frac{1}{2})}(m\tau)}
{g_{(\frac{1}{2},\frac{1}{2})}(\tau)}\bigg)^2~\textrm{by
Proposition \ref{phiexpression}(i)}\nonumber\\
&=&(-1)^\frac{1-m}{2}\prod_{k=1}^{m-1} g_{(0,\frac{k}{m})}(\tau)
g_{(\frac{1}{2},\frac{1}{2}+\frac{k}{m})}(\tau)^2
 ~\textrm{by
Propositions \ref{morerelation}(ii) and
\ref{phiexpression}(iii)}.\label{expression}
\end{eqnarray}
Hence $(\sqrt{m}\varphi(m\tau)/\varphi(\tau))^2$ belongs to
$\mathcal{F}_{48m^2}$ by Proposition \ref{integrality}(iii).
Therefore, $(\sqrt{m}\varphi(m\theta_K)/\varphi(\theta_K))^{2}$ lies
in $K_{(48m^2)}$ by Proposition \ref{CM}.
\end{proof}

\begin{proposition}\label{Rama}
If $N$ ($\geq2$) is an integer with more than one prime ideal factor
in $K$, then $g_{(0,\frac{1}{N})}(\theta_K)^{12N}$ is a unit which
lies in $K_{(N)}$.
\end{proposition}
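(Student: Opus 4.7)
My plan is to split Proposition~\ref{Rama} into two separate assertions---membership in $K_{(N)}$ and the unit property---and attack each with the tools already assembled in the paper, with the real work concentrated in a subcase.

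\textbf{Step 1 (membership and integrality).} By Proposition~\ref{integrality}(iii), $g_{(0,\frac{1}{N})}(\tau)^{12N/\gcd(6,N)}$ lies in $\mathcal{F}_N$; raising to the further $\gcd(6,N)$-th power keeps us inside the field, so $g_{(0,\frac{1}{N})}(\tau)^{12N}\in\mathcal{F}_N$. This function is holomorphic and nonvanishing on $\mathfrak{H}$, so evaluation at $\theta_K$ is legitimate, and the main theorem of complex multiplication (Proposition~\ref{CM}) places $g_{(0,\frac{1}{N})}(\theta_K)^{12N}$ inside $K\mathcal{F}_N(\theta_K)=K_{(N)}$. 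Algebraic integrality of the value is then immediate from Propositions~\ref{integrality}(i) and~\ref{singular}(i).

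\textbf{Step 2 (the inverse is integral).} The substance of the statement is that $g_{(0,\frac{1}{N})}(\theta_K)^{-12N}$ is also an algebraic integer, and I would split by the factorization of $N$ in $\mathbb{Z}$. If $N$ already has at least two distinct rational prime factors, then $(0,1/N)$ has composite primitive denominator, and Proposition~\ref{integrality}(ii) immediately gives that $g_{(0,\frac{1}{N})}(\tau)^{-1}$ is integral over $\mathbb{Z}[j(\tau)]$; specializing at $\theta_K$ and invoking Proposition~\ref{singular}(i) closes this case at once. The remaining case is $N=p^k$ a prime power, where the hypothesis of more than one prime ideal factor in $K$ forces $p$ to split as $p\mathcal{O}_K=\mathfrak{p}\bar{\mathfrak{p}}$, and Proposition~\ref{integrality}(ii) is no longer directly available.

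\textbf{Main obstacle.} The prime-power case is where genuine input from the arithmetic of $K$ must enter. My approach here would be to combine Shimura's reciprocity law (Proposition~\ref{Stev}) with the distribution relation of Proposition~\ref{morerelation}(i): reciprocity describes the action of the Artin symbols of $\mathfrak{p}$ and $\bar{\mathfrak{p}}$ on $g_{(0,\frac{1}{N})}(\theta_K)^{12N}$ as explicit matrices in $\mathrm{GL}_2(\mathbb{Z}/N\mathbb{Z})$, and because the two primes lie in independent residue directions modulo $p$ the Galois orbit sweeps out vectors $(c/N,d/N)$ covering both directions. The distribution relation of Proposition~\ref{morerelation}(i) gives a product of Siegel-function values equal to a power of $N$; combined with the orbit information from Shimura reciprocity one can isolate the multiplicative contribution at $\mathfrak{p}$ and $\bar{\mathfrak{p}}$ and force every prime valuation of $g_{(0,\frac{1}{N})}(\theta_K)^{12N}$ in $K_{(N)}$ to vanish. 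The most delicate bookkeeping I anticipate is matching the orbit structure predicted by Shimura reciprocity with the support conditions demanded by the distribution identity; this may well require first proving a restricted distribution relation along a single residue direction modulo $p$ before the unit property can be extracted cleanly.
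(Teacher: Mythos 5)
The paper does not actually prove this proposition: it is quoted as a known result with the proof deferred to Ramachandra's paper (the second Kronecker limit formula together with class field theory). So any self-contained argument you give has to reproduce a genuinely deep theorem, and your proposal does not get there. Your Step 1 (membership in $K_{(N)}$ via Proposition~\ref{integrality}(iii) and Proposition~\ref{CM}, integrality via Propositions~\ref{integrality}(i) and~\ref{singular}(i)) is fine, and your first subcase of Step 2 --- $N$ divisible by two distinct rational primes, so that $(0,1/N)$ has composite primitive denominator and Proposition~\ref{integrality}(ii) applies --- is correct and is exactly the mechanism the paper uses elsewhere (in the proof of Proposition~\ref{corollary}(ii)). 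But that subcase is the easy one, and it does not even need the hypothesis on $K$.

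The remaining case $N=p^k$ with $p$ split in $K$ is the entire content of the proposition, and there you have only a plan, not a proof, and the plan as stated cannot close. The distribution relation of Proposition~\ref{morerelation}(i) evaluated at $\theta_K$ gives $\prod g_{(a/N,b/N)}(\theta_K)^{12N}=N^{12N}$, whose valuation at any prime above $p$ is \emph{positive}; since Shimura reciprocity only permutes (a subset of) these factors, no amount of orbit bookkeeping can force every factor's valuation to vanish --- some factors genuinely are non-units (e.g.\ this is exactly what happens when $p$ is inert or ramified, where the same Galois-orbit-plus-distribution data is available and the conclusion is false). What actually distinguishes the split case is where the valuation concentrates: one must show that $g_t(\theta_K)$ is a unit above $\mathfrak{p}$ unless the torsion point $t$ reduces to the origin modulo a prime above $\mathfrak{p}$, and that the point $1/N$, having nonzero components in both the $\mathfrak{p}$- and $\bar{\mathfrak{p}}$-eigenspaces of the $N$-torsion, never does. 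That argument requires reduction theory of CM elliptic curves (or, as in Ramachandra, the limit formula), none of which is supplied by the propositions you invoke. Your own hedge (``this may well require first proving a restricted distribution relation'') is the admission of the gap: the restricted relation along a single residue direction is precisely the missing theorem, and it is not a corollary of Proposition~\ref{morerelation}(i).
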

\begin{proof}
See \cite{Ramachandra} $\S$6.
\end{proof}
\begin{remark}
In \cite{J-K-S} authors proved that if $K$ is an imaginary quadratic
field other than $\mathbb{Q}(\sqrt{-1})$ and $\mathbb{Q}\sqrt{-3})$,
then $g_{(0,\frac{1}{N})}(\theta_K)^{12N}$ is a primitive generator
of $K_{(N)}$ over $K$, which is called a \textit{Siegel-Ramachandra
invariant} (\cite{K-L} Chapter 11 $\S1$ or \cite{Ramachandra}).
\end{remark}

On the other hand, we have the following explicit description of the
Shimura's reciprocity law which connects the class field theory with
the theory of modular functions, due to Stevenhagen.

\begin{proposition}[Shimura's reciprocity law]\label{Stev}
Let $\mathrm{min}(\theta_K,~\mathbb{Q})=X^2+BX+C\in\mathbb{Z}[X]$.
For every positive integer $N$ the matrix group
\begin{equation*}W_{K,N}=\bigg\{\begin{pmatrix}t-Bs &
-Cs\\s&t\end{pmatrix}\in\mathrm{GL}_2(\mathbb{Z}/N\mathbb{Z})~:~t,~s\in\mathbb{Z}/N\mathbb{Z}\bigg\}
\end{equation*}
gives rise to the surjection
\begin{eqnarray}\label{surj}
W_{K,N}&\longrightarrow&\mathrm{Gal}(K_{(N)}/H_K)\\
\alpha&\mapsto&\bigg(h(\theta)\mapsto
h^\alpha(\theta_K)\bigg)\nonumber
\end{eqnarray}
where $h\in\mathcal{F}_N$ is defined and finite at $\theta_K$. Its
kernel is given by
\begin{equation}\label{kernel}
\left\{\begin{array}{ll}
\bigg\{\pm\begin{pmatrix}1&0\\0&1\end{pmatrix},~
\pm\begin{pmatrix}0&-1\\1&0\end{pmatrix}
\bigg\} & \textrm{if $K=\mathbb{Q}(\sqrt{-1})$}\vspace{0.2cm}\\
\bigg\{\pm\begin{pmatrix}1&0\\0&1\end{pmatrix},~
\pm\begin{pmatrix}-1&-1\\1&0\end{pmatrix},~
\pm\begin{pmatrix}0&-1\\1&1\end{pmatrix}
\bigg\} & \textrm{if $K=\mathbb{Q}(\sqrt{-3})$}\vspace{0.2cm}\\
\bigg\{\pm\begin{pmatrix}1&0\\0&1\end{pmatrix}\bigg\} &
\textrm{otherwise.}
\end{array}\right.
\end{equation}
\end{proposition}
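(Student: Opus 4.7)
The plan is to derive this explicit matrix form of Shimura's reciprocity law from its classical idelic formulation together with the Main Theorem of Complex Multiplication (Proposition~\ref{CM}). The essential observation is that $W_{K,N}$ is precisely the image of $(\mathcal{O}_K/N\mathcal{O}_K)^*$ under its regular representation on $\mathcal{O}_K/N\mathcal{O}_K$ with respect to the $\mathbb{Z}/N\mathbb{Z}$-basis $\{\theta_K,1\}$. Indeed, since $\theta_K^2=-B\theta_K-C$, multiplication by $s\theta_K+t$ sends $\theta_K\mapsto(t-Bs)\theta_K-Cs$ and $1\mapsto s\theta_K+t$, giving precisely the matrix $\begin{pmatrix}t-Bs&-Cs\\s&t\end{pmatrix}$. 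Moreover, $s\theta_K+t$ is a unit in $\mathcal{O}_K/N\mathcal{O}_K$ iff this matrix is in $\mathrm{GL}_2(\mathbb{Z}/N\mathbb{Z})$ (its determinant is the norm $t^2-Bst+Cs^2$), so one obtains an injection $(\mathcal{O}_K/N\mathcal{O}_K)^* \hookrightarrow \mathrm{GL}_2(\mathbb{Z}/N\mathbb{Z})$ whose image is exactly $W_{K,N}$.

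Next I would invoke the classical Shimura reciprocity law, which asserts that for a finite idele $x\in\mathbb{A}_{K,f}^*$ and $h\in\mathcal{F}_N$ finite at $\theta_K$, the Artin symbol $(x,K^{\mathrm{ab}}/K)$ acts by $h(\theta_K)\mapsto h^{u_N(x^{-1})}(\theta_K)$, where $u_N(x^{-1})$ is the matrix attached to $x^{-1}\bmod N$ via multiplication in the basis $\{\theta_K,1\}$. Class field theory for $K$ identifies $\mathrm{Gal}(K_{(N)}/H_K)$ with $(\mathcal{O}_K/N\mathcal{O}_K)^*/\overline{\mathcal{O}_K^*}$, where $\overline{\mathcal{O}_K^*}$ denotes the image of the global units. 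Combining this with the matrix identification of the previous paragraph yields the map $W_{K,N}\twoheadrightarrow\mathrm{Gal}(K_{(N)}/H_K)$ in (\ref{surj}); surjectivity is immediate, and well-definedness on $H_K$-fixing elements is guaranteed because $H_K$ is already contained in $K\mathcal{F}_1(\theta_K)=K(j(\theta_K))$, on which the $\mathcal{F}_N/\mathcal{F}_1$ Galois action is trivial.

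The kernel of (\ref{surj}) is exactly the image of $\mathcal{O}_K^*$ in $W_{K,N}$ under the matrix identification. For an imaginary quadratic $K$ distinct from $\mathbb{Q}(\sqrt{-1})$ and $\mathbb{Q}(\sqrt{-3})$ we have $\mathcal{O}_K^*=\{\pm 1\}$, and plugging $\pm 1=0\cdot\theta_K\pm 1$ into the formula yields only $\pm\begin{pmatrix}1&0\\0&1\end{pmatrix}$. For $K=\mathbb{Q}(\sqrt{-1})$, $\theta_K=i$ (so $B=0$, $C=1$) and $\mathcal{O}_K^*=\{\pm 1,\pm i\}$; writing $\pm i=\pm\theta_K+0$ produces the extra pair $\pm\begin{pmatrix}0&-1\\1&0\end{pmatrix}$. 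For $K=\mathbb{Q}(\sqrt{-3})$, $\theta_K=(-1+\sqrt{-3})/2$ (so $B=1$, $C=1$) and $\mathcal{O}_K^*$ is cyclic of order $6$ generated by $-\theta_K$; enumerating the six units and writing each as $s\theta_K+t$ produces the three listed pairs of matrices.

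The main obstacle is Step~2: the careful bookkeeping required to match the sign and inversion conventions in the idelic formulation of Shimura's reciprocity law with the matrix action on $\mathcal{F}_N$ from the preceding proposition, and to verify that one indeed obtains $W_{K,N}$ (as opposed to its transpose, its inverse image, or a conjugate). Once this dictionary is set up, both the surjectivity and the explicit kernel computation are routine, and the three-case description of the kernel follows by enumerating $\mathcal{O}_K^*$.
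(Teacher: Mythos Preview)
Your outline is correct and is essentially the argument given in the cited reference \cite{Stevenhagen}~\S3: identify $W_{K,N}$ with $(\mathcal{O}_K/N\mathcal{O}_K)^*$ via the regular representation on the basis $\{\theta_K,1\}$, transport the Artin map through Shimura's idelic reciprocity, and read off the kernel as the image of $\mathcal{O}_K^*$. The paper itself does not reproduce this argument but simply refers the reader to Stevenhagen, so your proposal in fact supplies more detail than the paper's own ``proof'' (and your explicit kernel check in the three cases is accurate).
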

\begin{proof}
See \cite{Stevenhagen} $\S$3.
\end{proof}

\begin{proposition}\label{etaextend}
If $m$ ($\geq2$) is an integer whose prime factors split in $K$,
then $\sqrt{m}\eta(m\theta_K)/\eta(\theta_K)$ is a unit.
\end{proposition}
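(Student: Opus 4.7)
Since Proposition \ref{corollary}(i) already tells us $\sqrt{m}\eta(m\theta_K)/\eta(\theta_K)$ is an algebraic integer, it is enough to check that its square is a unit. Squaring and invoking Proposition \ref{morerelation}(ii) rewrites
\[
\Bigl(\sqrt{m}\,\tfrac{\eta(m\theta_K)}{\eta(\theta_K)}\Bigr)^{2}
 \;=\; i^{\,1-m}\prod_{k=1}^{m-1} g_{(0,\frac{k}{m})}(\theta_K).
\]
Since $i^{1-m}$ is a root of unity, the task reduces to proving that each factor $g_{(0,k/m)}(\theta_K)$ is a unit; the product is then a unit, its square root lies among the algebraic integers, and an algebraic integer whose square is a unit is itself a unit.

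For a fixed $k \in \{1,\dots,m-1\}$, write $d=\gcd(k,m)$, $N=m/d$, and $k'=k/d$, so that $(0,k/m)=(0,k'/N)$ has primitive denominator $N\geq 2$ and $\gcd(k',N)=1$. Every rational prime dividing $N$ also divides $m$ and therefore splits in $K$, so for each prime $p\mid N$ we have $p\mathcal{O}_K=\mathfrak{p}\bar{\mathfrak{p}}$ with $\mathfrak{p}\neq\bar{\mathfrak{p}}$. In particular $N\mathcal{O}_K$ has more than one distinct prime ideal factor in $K$, and Proposition \ref{Rama} applies to give that $g_{(0,1/N)}(\theta_K)^{12N}$ is a unit of $K_{(N)}$.

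To transport this unit property along the shift $(0,1/N)\mapsto (0,k'/N)$, apply Shimura's reciprocity (Proposition \ref{Stev}) with the diagonal matrix $\alpha=\begin{pmatrix}k' & 0 \\ 0 & k'\end{pmatrix}$. Choosing $s=0$ and $t=k'$ in the description of $W_{K,N}$ shows $\alpha\in W_{K,N}$, and $\gcd(k',N)=1$ makes it invertible. By Proposition \ref{transform}(iv) its action on $g_{(0,1/N)}(\tau)^{12N/\gcd(6,N)}\in\mathcal{F}_N$ sends this function to $g_{(0,k'/N)}(\tau)^{12N/\gcd(6,N)}$; raising to the $\gcd(6,N)$-th power and evaluating at $\theta_K$, the corresponding automorphism of $K_{(N)}/H_K$ carries $g_{(0,1/N)}(\theta_K)^{12N}$ to $g_{(0,k'/N)}(\theta_K)^{12N}$. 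A Galois conjugate of a unit is a unit, and $g_{(0,k'/N)}(\theta_K)$ is an algebraic integer by Proposition \ref{integrality}(i); since a unit power of an algebraic integer forces the integer to be a unit, $g_{(0,k/m)}(\theta_K)=g_{(0,k'/N)}(\theta_K)$ is a unit, which completes the argument.

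The main obstacle is the last step: one must produce an explicit matrix in $W_{K,N}$ that realizes the desired substitution in the second coordinate of the Siegel index. The scalar matrix $\mathrm{diag}(k',k')$ does this uniformly in $K$ and for every divisor $N\mid m$ that occurs, so no case distinction on $K$ (such as the enlarged kernel of \eqref{kernel} when $K=\mathbb{Q}(\sqrt{-1})$ or $\mathbb{Q}(\sqrt{-3})$) is needed.
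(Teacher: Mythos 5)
Your proposal is correct and follows essentially the same route as the paper: reduce via Proposition \ref{morerelation}(ii) to showing each $g_{(0,k/m)}(\theta_K)$ is a unit, pass to lowest terms $k/m=k'/N$, invoke Ramachandra's result for $g_{(0,1/N)}(\theta_K)^{12N}$ (using that the split primes give $N$ more than one prime ideal factor), and transport by the scalar matrix $\mathrm{diag}(k',k')\in W_{K,N}$ under Shimura reciprocity. The only cosmetic difference is that you work with the square and the individual Siegel values rather than the $24m$-th power and their $12m$-th powers as in the paper; both finish by noting an algebraic integer with a unit power is a unit.
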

\begin{proof}
We get from Proposition \ref{morerelation}(ii) that
\begin{equation}\label{expression2}
\bigg(\sqrt{m}\frac{\eta(m\theta_K)}{\eta(\theta_K)}\bigg)^{24m}
=\prod_{k=1}^{m-1}g_{(0,\frac{k}{m})}(\theta_K)^{12m}.
\end{equation}
For each $1\leq k\leq m-1$, let us write
\begin{equation*}\frac{k}{m}
=\frac{a}{b}~\textrm{with positive integers $a$ and $b$ such that
$\gcd(a,b)=1$}.
\end{equation*}
Since $g_{(0,\frac{1}{b})}(\theta_K)^{12b}$ lies in $K_{(b)}$ by
Propositions \ref{integrality}(iii) and \ref{CM}, and
$\begin{pmatrix}a&0\\0&a\end{pmatrix}\in
W_{K,b}\simeq\mathrm{Gal}(K_{(b)}/H_K)$, we derive that
\begin{eqnarray*}
\bigg(g_{(0,\frac{1}{b})}(\theta_K)^{12b}\bigg)^{\left(\begin{smallmatrix}
a&0\\0&a\end{smallmatrix}\right)}&=&
\bigg(g_{(0,\frac{1}{b})}(\tau)^{12b}\bigg)^{\left(\begin{smallmatrix}
a&0\\0&a\end{smallmatrix}\right)}(\theta_K)~ \textrm{by Proposition
\ref{Stev}}\\
&=&
\bigg(g_{(0,\frac{1}{b})\left(\begin{smallmatrix}a&0\\0&a\end{smallmatrix}\right)}(\tau)^{12b}\bigg)(\theta_K)
~\textrm{by Proposition \ref{transform}(iv)}\\
&=&g_{(0,\frac{a}{b})}(\theta_K)^{12b}.
\end{eqnarray*}
On the other hand, since $b$ has more than one prime ideal factor in
$K$ by the assumption on $m$, $g_{(0,\frac{1}{b})}(\theta_K)^{12b}$
is a unit by Proposition \ref{Rama}. Hence
$g_{(0,\frac{k}{m})}(\theta_K)^{12m}=
(g_{(0,\frac{a}{b})}(\theta_K)^{12b})^{m/b}$ is also a unit.
Therefore $\sqrt{m}{\eta(m\theta_K)}{\eta(\theta_K)}$ becomes a unit
by the relation (\ref{expression2}).
\end{proof}

Now, we can prove Theorem \ref{unitcriterion}. Let $m$ ($\geq3$) be
an odd integer whose prime factors split in $K$. Since both
$\sqrt{m}\eta(m\theta_K)/\eta(\theta_K)$ and
$g_{(\frac{1}{2},\frac{1}{2})}(m\theta_K)/g_{(\frac{1}{2},\frac{1}{2})}(\theta_K)$
are units by Propositions \ref{etaextend} and \ref{corollary}(ii),
the result follows from the expression (\ref{expression3}) with
$\tau_0=\theta_K$. This completes the proof.

\begin{corollary}\label{cor2}
Let $m$ ($\geq3$) be an odd integer whose prime factors $p$ satisfy
$p\equiv1\pmod{4}$. Then $\sqrt{m}\varphi(mi)/\varphi(i)$ is a unit.
\end{corollary}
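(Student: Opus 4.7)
The plan is to specialize Theorem \ref{unitcriterion} to the field $K = \mathbb{Q}(\sqrt{-1})$ and show that the two hypotheses of the corollary precisely match the hypotheses of that theorem for this particular $K$.

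First, I would identify $\theta_K$ for $K=\mathbb{Q}(\sqrt{-1})$. Its discriminant is $d_K = -4 \equiv 0 \pmod{4}$, so the first branch of definition (\ref{theta}) applies and gives $\theta_K = \sqrt{d_K}/2 = i$. Consequently,
\begin{equation*}
\sqrt{m}\,\varphi(m\theta_K)/\varphi(\theta_K) \;=\; \sqrt{m}\,\varphi(mi)/\varphi(i),
\end{equation*}
so the quantity in the corollary is literally the quantity appearing in Theorem \ref{unitcriterion} for this field.

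Next I would check the splitting hypothesis. By the classical criterion (a direct application of quadratic reciprocity, or the factorization of $X^2+1$ modulo $p$), an odd prime $p$ splits in $\mathbb{Q}(\sqrt{-1})$ if and only if $-1$ is a square modulo $p$, which happens exactly when $p \equiv 1 \pmod{4}$. Since $m \geq 3$ is odd, every prime factor of $m$ is odd, and the assumption "every prime factor $p$ of $m$ satisfies $p \equiv 1 \pmod 4$" is therefore equivalent to "every prime factor of $m$ splits in $K$". Thus the hypothesis of Theorem \ref{unitcriterion} is satisfied.

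Applying Theorem \ref{unitcriterion} to $K=\mathbb{Q}(\sqrt{-1})$ with this $m$ yields that $\sqrt{m}\,\varphi(mi)/\varphi(i)$ is a unit, finishing the proof. There is essentially no obstacle here; the only point of care is the matching of $\theta_K$ to $i$ via the correct case of (\ref{theta}), and the translation of the splitting condition into the congruence $p \equiv 1 \pmod 4$.
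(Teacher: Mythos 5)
Your proposal is correct and matches the paper's own proof: both set $K=\mathbb{Q}(\sqrt{-1})$ so that $\theta_K=i$, observe that an odd prime $p\equiv1\pmod{4}$ splits in $K$, and then apply Theorem \ref{unitcriterion}. No issues.
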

\begin{proof}
If $K=\mathbb{Q}(\sqrt{-1})$, then $\theta_K=i$. For each prime
factor $p$ of $m$, the fact $p\equiv1\pmod{4}$ implies that $p$
splits in $K$ (\cite{Cox} Corollary 5.17). We get the assertion by
applying Theorem \ref{unitcriterion}.
\end{proof}

We close this section by evaluating $\sqrt{m}\varphi(mi)/\varphi(i)$
for $m=3$ and $5$, explicitly.

\begin{example}
We shall evaluate $\sqrt{3}\varphi(3i)/\varphi(i)$. If
$K=\mathbb{Q}(\sqrt{-1})$, then $\theta_K=i$ and $H_K=K$ (\cite{Cox}
Theorem 12.34). By Proposition \ref{Stev} we have
\begin{eqnarray*}
\mathrm{Gal}(K_{(6)}/K)&\simeq&W_{K,6}/
\bigg\{\pm\begin{pmatrix}1&0\\0&1\end{pmatrix},~
\pm\begin{pmatrix}0&-1\\1&0\end{pmatrix} \bigg\}\\
&=&\bigg\{\alpha_1=\begin{pmatrix}1&0\\0&1\end{pmatrix},~
\alpha_2=\begin{pmatrix}1&-2\\2&1\end{pmatrix},~
\alpha_3=\begin{pmatrix}1&-4\\4&1\end{pmatrix},~
\alpha_4=\begin{pmatrix}3&-2\\2&3\end{pmatrix}\bigg\}.
\end{eqnarray*}
Since
\begin{eqnarray*}
x=\bigg(\sqrt{3}\frac{\varphi(3i)}{\varphi(i)}\bigg)^{24}&=&g_{(0,\frac{1}{3})}(i)^{12}g_{(0,\frac{2}{3})}(i)^{12}
g_{(\frac{1}{2},\frac{5}{6})}(i)^{24}g_{(\frac{1}{2},\frac{7}{6})}(i)^{24}
~\textrm{by (\ref{expression})}\\
&=&g_{(0,\frac{1}{3})}(i)^{24}g_{(\frac{1}{2},\frac{1}{6})}(i)^{48}
~\textrm{by Proposition \ref{transform}(i) and (iii)}\\
&\approx& 72954,
\end{eqnarray*}
$x$ lies in $K_{(6)}$ by Propositions \ref{integrality}(iii) and
\ref{CM}. Hence its conjugates $x_k=x^{\alpha_k}$ ($1\leq k\leq4$)
over $K$ are
\begin{eqnarray*}
x_1&=&g_{(0,\frac{1}{3})}(i)^{24}g_{(\frac{1}{2},\frac{1}{6})}(i)^{48},\\
x_2&=&g_{(\frac{2}{3},\frac{1}{3})}(i)^{24}g_{(\frac{5}{6},\frac{1}{6})}(i)^{48},\\
x_3&=&g_{(\frac{1}{3},\frac{1}{3})}(i)^{24}g_{(\frac{1}{6},\frac{1}{6})}(i)^{48},\\
x_4&=&g_{(\frac{2}{3},0)}(i)^{24}g_{(\frac{5}{6},\frac{1}{2})}(i)^{48}
\end{eqnarray*}
with some multiplicity by Propositions \ref{Stev} and
\ref{transform}(iv).
 We claim that the minimal polynomial of $x$
over $K$ has integeral coefficients. Indeed, since $x$ is a real
algebraic integer by the definition (\ref{phi}) and Theorem
\ref{main}, we have
\begin{equation*}
[\mathbb{Q}(x):\mathbb{Q}]=\frac{[K(x):K]\cdot[K:\mathbb{Q}]}{[K(x):\mathbb{Q}(x)]}
=\frac{[K(x):K]\cdot2}{2}=[K(x):K],
\end{equation*}
from which the claim follows. Thus $x$ is a zero of the polynomial
\begin{equation*}
(X-x_1)(X-x_2)(X-x_3)(X-x_4)=(X^2-72954X+729)^2
\end{equation*}
whose coefficients are determined by numerical approximation.
Therefore we obtain
\begin{equation*}
\sqrt{3}\frac{\varphi(3i)}{\varphi(i)}=\sqrt[24]{x}
=\sqrt[24]{36477+21060\sqrt{3}}=\sqrt[4]{3+2\sqrt{3}}.
\end{equation*}
\end{example}

\begin{example}
Now, we consider $\sqrt{5}\varphi(\sqrt{5}i)/\varphi(i)$. Let
$K=\mathbb{Q}\sqrt{-1})$. By Proposition \ref{Stev} we have
\begin{eqnarray*}
W_{K,10}&\simeq&\bigg\{\alpha_1=\begin{pmatrix}1&0\\0&1\end{pmatrix},~
\alpha_2=\begin{pmatrix}1&-4\\4&1\end{pmatrix},~
\alpha_3=\begin{pmatrix}1&-6\\6&1\end{pmatrix},~
\alpha_4=\begin{pmatrix}2&-3\\3&2\end{pmatrix},\\
&&\alpha_5=\begin{pmatrix}2&-5\\5&2\end{pmatrix},~
\alpha_6=\begin{pmatrix}2&-7\\7&2\end{pmatrix},~
\alpha_7=\begin{pmatrix}3&0\\0&3\end{pmatrix},~
\alpha_8=\begin{pmatrix}4&-5\\5&4\end{pmatrix}\bigg\}.
\end{eqnarray*}
Since
\begin{eqnarray*}
x=\bigg(\sqrt{5}\frac{\varphi(5i)}{\varphi(i)}\bigg)^{120}&=&
g_{(0,\frac{1}{5})}(i)^{120} g_{(0,\frac{2}{5})}(i)^{120}
g_{(\frac{1}{2},\frac{1}{10})}(i)^{240}
g_{(\frac{1}{2},\frac{3}{10})}(i)^{240}
~\textrm{by Proposition \ref{transform}(i) and (iii)}\\
&\approx&41473935220454921602871195774259272002,
\end{eqnarray*}
$x$ lies in $K_{(10)}$ by Propositions \ref{integrality}(iii) and
\ref{CM}.  Hence its conjugates $x_k=x^{\alpha_k}$ ($1\leq k\leq8$)
over $K$ are
\begin{eqnarray*}
x_1=x_5=x_7=x_8&=&g_{(0,\frac{1}{5})}(i)^{120}
g_{(0,\frac{2}{5})}(i)^{120} g_{(\frac{1}{2},\frac{1}{10})}(i)^{240}
g_{(\frac{1}{2},\frac{3}{10})}(i)^{240},\\
x_2&=&g_{(\frac{4}{5},\frac{1}{5})}(i)^{120}
g_{(\frac{3}{5},\frac{2}{5})}(i)^{120}
g_{(\frac{9}{10},\frac{1}{10})}(i)^{240}
g_{(\frac{7}{10},\frac{3}{10})}(i)^{240},\\
x_3=x_6&=&g_{(\frac{1}{5},\frac{1}{5})}(i)^{120}
g_{(\frac{2}{5},\frac{2}{5})}(i)^{120}
g_{(\frac{1}{10},\frac{1}{10})}(i)^{240}
g_{(\frac{3}{10},\frac{3}{10})}(i)^{240},\\
x_4&=&g_{(\frac{3}{5},\frac{2}{5})}(i)^{120}
g_{(\frac{1}{5},\frac{4}{5})}(i)^{120}
g_{(\frac{3}{10},\frac{7}{10})}(i)^{240}
g_{(\frac{9}{10},\frac{1}{10})}(i)^{240}
\end{eqnarray*}
with some multiplicity by Propositions \ref{Stev} and
\ref{transform}(iv). So $x$ is a zero of the polynomial
\begin{equation*}
(X^2-41473935220454921602871195774259272002X+1)^4,
\end{equation*}
which illustrates that $x$ is a unit. Therefore we get
\begin{eqnarray*}
\sqrt{5}\frac{\varphi(5i)}{\varphi(i)}&=&\sqrt[120]{x}\\
&=&\sqrt[120]{20736967610227460801435597887129636001+9273853844735993106095069260699853880\sqrt{5}}\\
&=&\sqrt[10]{682+305\sqrt{5}}.
\end{eqnarray*}
\end{example}

\bibliographystyle{amsplain}

\end{document}